\documentclass[11pt,draft]{article}
\usepackage{amsmath,amscd}
\usepackage{amssymb,latexsym,amsthm}
\usepackage{color}
\usepackage{amssymb}
\usepackage{color}
\usepackage{amsmath,amsthm,amscd}
\usepackage[latin1]{inputenc}
\usepackage{lscape}
\usepackage{fancyhdr}
\usepackage{amsfonts}
\usepackage{pb-diagram}

\numberwithin{equation}{section}
\newtheorem{theorem}{Theorem}[section]

\newtheorem{proposition}[theorem]{Proposition}

\newtheorem{corollary}[theorem]{Corollary}

\theoremstyle{definition}
\newtheorem{example}[theorem]{Example}
\newtheorem{definition}[theorem]{Definition}
\newtheorem{examples}[theorem]{Examples}

\newtheorem{remark}[theorem]{Remark}

\hoffset-0.3in
\voffset-1.3cm \setlength{\oddsidemargin}{9mm}
\setlength{\textheight}{21.3cm}\setlength{\textwidth}{15.8cm}

\title{\textbf{Nakayama automorphism of  quasi-commutative skew PBW extensions over AS-regular algebras}}
\author{ H\'ector Su\'arez\footnote{Seminario de \'Algebra Constructiva - SAC2,
Universidad Nacional de Colombia - Sede Bogot\'a.} \footnote{Escuela
de Matem\'aticas y Estad\'{i}stica, Universidad Pedag\'ogica y
Tecnol\'ogica de Colombia - Sede Tunja. } \\Oswaldo Lezama
\footnote{Seminario de \'Algebra Constructiva - SAC2,
Departamento de Matem\'aticas, Universidad Nacional de Colombia - Sede Bogot\'a.}\\Armando Reyes \footnote{Seminario de \'Algebra
Constructiva - SAC2, Departamento de Matem\'aticas, Universidad
Nacional de Colombia - Sede Bogot\'a.}}
\date{}
\begin{document}
\maketitle
\begin{abstract}
\noindent Graded quasi-commutative skew PBW extensions are isomorphic to graded iterated Ore extensions of endomorphism type, whence  graded quasi-commutative skew PBW extensions with coefficients in AS-regular algebras are skew Calabi-Yau and the Nakayama automorphism exists for these extensions. With this in mind, in this paper we give a description of Nakayama automorphism for these non-commutative algebras using the Nakayama automorphism of the ring of the coefficients.

\bigskip

\noindent \textit{Key words and phrases.} Quasi-commutative skew PBW extensions,  AS-regular algebras, Nakayama automorphism.

\bigskip

\noindent 2010 \textit{Mathematics Subject Classification.}
 16W50, 16S37,  16W70, 16S36, 13N10.
\end{abstract}

\section{Introduction}

Skew PBW extensions and quasi-commutative skew PBW extensions were
defined in \cite{LezamaGallego}. In \cite{Venegas2015} and
\cite{Artamonov1}, respectively, the automorphisms and derivations
of skew PBW extensions were studied. Another properties of  skew PBW
extensions have been studied (see for example \cite{Artamonov1},
\cite{Yesica}, \cite{LezamaAcostaReyes2015}, \cite{LezamaReyes},
\cite{ReyesSuarez2106-2}, \cite{ReyesSuarez2106-3},
\cite{ReyesSuarez2106-4}, \cite{ReyesSuarez2106-5},
\cite{ReyesSuarez2107-4}, \cite{ReyesSuarez2017-3},
\cite{ReyesSuarez2017-2}, \cite{ReyesSuarez2017-1}, \cite{Suarez},
\cite{SuarezLezamaReyes2015}, \cite{SuarezLezamaReyes2107-1},
\cite{SuarezReyes2016} and \cite{SuarezReyes7}). It is known that
quasi-commutative skew PBW extensions are isomorphic to iterated Ore
extensions of endomorphism type (\cite{LezamaReyes}, Theorem 2.3).
For $\mathbb{K}$ a field, in \cite{Suarez}, the first author defined
graded skew PBW extensions and showed that  if $R$ is a finite
presented Koszul $\mathbb{K}$-algebra, then every graded skew PBW
extension of $R$ is Koszul (note that every graded iterated Ore
extension of an Artin-Schelter Regular algebra, AS-regular for
short, is AS-regular, see \cite{Liu1}). Since every graded
quasi-commutative skew PBW extension is isomorphic to a graded
iterated Ore extension of endomorphism type (see Proposition
\ref{prop.baseHilbert quasi}),  we have that if $A$ is a graded
quasi-commutative skew PBW extension of an AS-regular algebra $R$,
then $A$ is AS-regular (see Proposition \ref{prop. AS impl AS}).
Now, for $B$ a connected algebra, $B$ is AS-regular if and only if
$B$ is skew Calabi-Yau (see \cite{Reyes}), and hence the Nakayama
automorphism of AS-regular algebras exists. Therefore, if $R$ is an
AS-regular algebra with Nakayama automorphism $\nu$, then the
Nakayama automorphism $\mu$ of a graded quasi-commutative skew PBW
extension $A$ exists, and we compute it using the Nakayama
automorphism $\nu$ together some especial automorphisms of $R$ and
$A$ (see Theorem \ref{prop.Nakayama}). Note that a skew Calabi-Yau
algebra  is Calabi-Yau if and only if its Nakayama automorphism is
inner.

\section{Graded skew PBW extensions}
\begin{definition}[\cite{LezamaGallego}, Definition 1]\label{def.skewpbwextensions}
Let $R$ and $A$ be rings. We say that $A$ is a \textit{skew PBW
extension over} $R$,  if the following conditions hold:
\begin{enumerate}
\item[\rm (i)]$R\subseteq A$;
\item[\rm (ii)]there exist elements $x_1,\dots ,x_n\in A$ such that $A$ is a left free $R$-module, with basis the basic elements
 ${\rm Mon}(A):= \{x^{\alpha}=x_1^{\alpha_1}\cdots
x_n^{\alpha_n}\mid \alpha=(\alpha_1,\dots ,\alpha_n)\in
\mathbb{N}^n\}$. In this case, it is said also that $A$ is a left polynomial ring over $R$ with respect
to $\{x_1,\cdots, x_n\}$ and Mon(A) is the set of standard monomials of $A$. Moreover,
$x^0_1\cdots x^0_n := 1 \in {\rm Mon}(A)$.

\item[\rm (iii)]For each $1\leq i\leq n$ and any $r\in R\ \backslash\ \{0\}$, there exists an element $c_{i,r}\in R\ \backslash\ \{0\}$ such that $x_ir-c_{i,r}x_i\in R$. 
\item[\rm (iv)]For any natural elements $1\leq i,j\leq n$, there exists $c_{i,j}\in R\ \backslash\ \{0\}$ such that
\begin{equation}\label{sigmadefinicion2}
x_jx_i-c_{i,j}x_ix_j\in R+Rx_1+\cdots +Rx_n.
\end{equation}
Under these conditions, we will write $A:=\sigma(R)\langle
x_1,\dots,x_n\rangle$.
\end{enumerate}
\end{definition}

\begin{remark}\label{rem.grad f y rep} Let $A=\sigma(R)\langle x_1,\dots, x_n\rangle$ be a skew PBW extension with endomorphisms $\sigma_i$, $1\leq i \leq n$,
as in the Proposition \ref{sigmadefinition}. We establish the following notation (see \cite{LezamaGallego}, Definition 6):  $\alpha: = (\alpha_1,\dots, \alpha_n)\in \mathbb{N}^n$; $\sigma^{\alpha}:= (\sigma_1^{\alpha_1}\cdots  \sigma_n^{\alpha_n})$; $|\alpha|:=\alpha_1+\cdots+\alpha_n$; if $\beta: = (\beta_1,\dots, \beta_n)\in \mathbb{N}^n$, then $\alpha +\beta := (\alpha_1+ \beta_1,\dots, \alpha_n +\beta_n)$; for $X = x^{\alpha}=x_1^{\alpha_1}\cdots
x_n^{\alpha_n}$, exp($X$)$:= \alpha$ and deg($X$)$:= |\alpha|$. We have the following properties whose proofs  can be found in \cite{LezamaGallego}, Remark 2 and  Theorem 7.
\begin{enumerate}
\item [\rm (i)] Since ${\rm Mon}(A)$ is a left $R$-basis of $A$, the elements $c_{i,r}$ and $c_{i,j}$ in Definition \ref{def.skewpbwextensions} are unique. In Definition \ref{def.skewpbwextensions} (iv),
$c_{i,i}=1$. This follows from
$x_i^2-c_{i,i}x_i^2=s_0+s_1x_1+\cdots+s_nx_n$, with $s_j\in R$,
which implies $1-c_{i,i}=0=s_j$, for $0\leq j\leq n$.

\item[\rm (ii)]Each element $f\in A\ \backslash\ \{0\}$ has a unique representation as $f=c_1X_1+\cdots+c_tX_t$, with $c_i\in R\ \backslash\ \{0\}$ and $X_i\in {\rm Mon}(A)$,  for $1\leq i\leq t$.
\item[\rm (iii)] For every $x^{\alpha}\in {\rm Mon}(A)$ and every $0\neq r\in  R$, there exist unique elements $r_{\alpha}:=\sigma^{\alpha}(r)\in R \ \backslash\ \{0\}$ and $p_{\alpha, r}\in A$ such that $x^{\alpha}r = r_{\alpha}x^{\alpha} + p_{\alpha, r}$, where $p_{\alpha, r}= 0$ or deg$(p_{\alpha, r}) < |\alpha|$, if $p_{\alpha, r}\neq 0$.
\item[\rm (iv)] For every $x^{\alpha}$, $x^{\beta}\in {\rm Mon}(A)$ there exist unique elements $c_{\alpha,\beta}\in R$ and $p_{\alpha,\beta}\in A$ such
that $x^{\alpha}x^{\beta}= c_{\alpha,\beta}x^{\alpha+\beta} + p_{\alpha,\beta}$ where $c_{\alpha,\beta}$ is left invertible, $p_{\alpha,\beta}= 0$ or deg$(p_{\alpha,\beta}) <|\alpha + \beta|$,  if $p_{\alpha,\beta}\neq 0$.
\end{enumerate}
\end{remark}

\begin{proposition}[\cite{LezamaGallego}, Proposition 3]\label{sigmadefinition}
Let $A$ be a skew PBW extension of $R$. For each $1\leq i\leq
n$, there exist an injective endomorphism $\sigma_i:R\rightarrow
R$ and a $\sigma_i$-derivation $\delta_i:R\rightarrow R$ such that $x_ir=\sigma_i(r)x_i+\delta_i(r),\ r \in R$.
\end{proposition}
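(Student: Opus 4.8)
The plan is to read the endomorphism $\sigma_i$ and the map $\delta_i$ straight off condition (iii) of Definition \ref{def.skewpbwextensions}, and then verify all the required identities by comparing coefficients in the left $R$-basis ${\rm Mon}(A)$, whose uniqueness is recorded in Remark \ref{rem.grad f y rep}(i)--(ii). Concretely, fix $1\leq i\leq n$, put $\sigma_i(0):=0$ and $\delta_i(0):=0$, and for $r\in R\setminus\{0\}$ set $\sigma_i(r):=c_{i,r}$, the element supplied by (iii), and $\delta_i(r):=x_ir-c_{i,r}x_i$, which lies in $R$ by (iii). Since ${\rm Mon}(A)$ is a left $R$-basis of $A$, the expansion $x_ir=\sigma_i(r)x_i+\delta_i(r)\cdot 1$ shows that $\sigma_i(r)$ (the coefficient of $x_i$) and $\delta_i(r)$ (the coefficient of $1$) are uniquely determined by $r$, so both maps are well defined, and the identity $x_ir=\sigma_i(r)x_i+\delta_i(r)$ holds for every $r\in R$, trivially including $r=0$.

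Next I would check additivity. For $r,s\in R$ expand $x_i(r+s)$ two ways: directly as $\sigma_i(r+s)x_i+\delta_i(r+s)$, and as $x_ir+x_is=(\sigma_i(r)+\sigma_i(s))x_i+(\delta_i(r)+\delta_i(s))$; uniqueness of coefficients in ${\rm Mon}(A)$ forces $\sigma_i(r+s)=\sigma_i(r)+\sigma_i(s)$ and $\delta_i(r+s)=\delta_i(r)+\delta_i(s)$ (the cases where an argument is $0$ are immediate). For multiplicativity, compute $x_i(rs)=(x_ir)s=(\sigma_i(r)x_i+\delta_i(r))s=\sigma_i(r)(x_is)+\delta_i(r)s=\sigma_i(r)\sigma_i(s)x_i+\sigma_i(r)\delta_i(s)+\delta_i(r)s$, and compare with $x_i(rs)=\sigma_i(rs)x_i+\delta_i(rs)$: uniqueness gives $\sigma_i(rs)=\sigma_i(r)\sigma_i(s)$ and $\delta_i(rs)=\sigma_i(r)\delta_i(s)+\delta_i(r)s$, which is precisely the $\sigma_i$-derivation rule. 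Finally, $x_i\cdot 1=x_i=\sigma_i(1)x_i+\delta_i(1)$ yields $\sigma_i(1)=1$ and $\delta_i(1)=0$, so $\sigma_i$ is a ring endomorphism and $\delta_i$ a $\sigma_i$-derivation.

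Injectivity of $\sigma_i$ is then exactly the nonvanishing clause in (iii): if $r\neq 0$ then $\sigma_i(r)=c_{i,r}\neq 0$, so $\ker\sigma_i=0$. I do not expect a genuine obstacle here; the one point that needs care is the well-definedness step, where one must use that ${\rm Mon}(A)$ is a \emph{basis} of the left $R$-module $A$ (so that coefficients are unique, cf.\ Remark \ref{rem.grad f y rep}(i)), together with the separate handling of $r=0$, which condition (iii) does not address directly. Everything else is routine coefficient bookkeeping.
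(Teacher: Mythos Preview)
Your argument is correct and is exactly the standard proof: read off $\sigma_i(r)$ and $\delta_i(r)$ as the coefficients of $x_i$ and $1$ in $x_ir$, use the left $R$-basis property of ${\rm Mon}(A)$ to get uniqueness, and then verify additivity, multiplicativity, the $\sigma_i$-derivation rule, and injectivity by comparing coefficients. The paper itself does not supply a proof of this proposition; it simply cites \cite{LezamaGallego}, Proposition~3, where the same argument appears, so there is nothing further to compare.
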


The notation $\sigma(R)\langle x_1,\dots,x_n\rangle$ and the name of
the skew PBW extensions are due to the Proposition
\ref{sigmadefinition}. In the following definition we recall  some
sub-classes of skew PBW extensions. Examples of these sub-classes of
algebras can be found in \cite{SuarezReyes7}.
\begin{definition}\label{sigmapbwderivationtype}
Let $A$ be a skew PBW extension of $R$, $\Sigma:=\{\sigma_1,\dotsc, \sigma_n\}$ and $\Delta:=\{\delta_1,\dotsc, \delta_n\}$, where $\sigma_i$ and $\delta_i$ ($1\leq i\le n$) are as in Proposition \ref{sigmadefinition}
\begin{enumerate}
\item[\rm (a)]  $A$ is called {\em pre-commutative}, if the conditions  {\rm(}iv{\rm)} in Definition
\ref{def.skewpbwextensions} are replaced by:\\
 For any $1\leq i,j\leq n$,  there exists $c_{i,j}\in R\ \backslash\ \{0\}$ such that $
x_jx_i-c_{i,j}x_ix_j\in Rx_1+\cdots +Rx_n$.
\item[\rm (b)]\label{def.quasicom} $A$ is called \textit{quasi-commutative}, if the conditions
{\rm(}iii{\rm)} and {\rm(}iv{\rm)} in Definition
\ref{def.skewpbwextensions} are replaced by \begin{enumerate}
\item[\rm (iii')] for each $1\leq i\leq n$ and all $r\in R\ \backslash\ \{0\}$, there exists $c_{i,r}\in R\ \backslash\ \{0\}$ such that
\begin{equation}\label{rel1.quasi}
x_ir=c_{i,r}x_i;
\end{equation}
\item[\rm (iv')]for any $1\leq i,j\leq n$, there exists $c_{i,j}\in R\ \backslash\ \{0\}$ such that
\begin{equation}\label{rel2.quasi}
x_jx_i=c_{i,j}x_ix_j.
\end{equation}
\end{enumerate}
\item[\rm (c)]  $A$ is called \textit{bijective}, if $\sigma_i$ is bijective for each $\sigma_i\in \Sigma$, and $c_{i,j}$ is invertible for any $1\leq
i<j\leq n$.
\item[\rm (d)] If $\sigma_i={\rm id}_R$, for every $\sigma_i\in \Sigma$, we say that $A$ is a skew PBW extension of {\em derivation type}.
\item[\rm (e)]  If $\delta_i = 0$, for every $\delta_i\in \Delta$, we say that $A$ is a skew PBW extension of {\em endomorphism type}.
\item[\rm (f)]   Any element $r$ of $R$ such that $\sigma_i(r)=r$ and $\delta_i(r)=0$ for all $1\leq i\leq n$, will be called a {\em constant}. $A$ is called \emph{constant} if every element of $R$ is constant.
\item[\rm (g)]  $A$ is called {\em semi-commutative} if $A$ is quasi-commutative and constant.
\end{enumerate}
\end{definition}

As we said in the Introduction, the letter $\mathbb{K}$ denotes a field, and  if it is not said otherwise,  every algebra is a $\mathbb{K}$-algebra. The symbol $\mathbb{N}$ will be used to denote the set of natural numbers including zero.

The next proposition was proved by the first author in \cite{Suarez}.

\begin{proposition}\label{prop.grad A}
Let $R=\bigoplus_{m\geq 0}R_m$ be a $\mathbb{N}$-graded algebra and
let $A=\sigma(R)\langle x_1,\dots, x_n\rangle$ be a bijective skew
PBW extension of $R$ satisfying the fo\-llo\-wing two conditions:
\begin{enumerate}
\item[\rm (i)] $\sigma_i$ is a graded ring homomorphism and $\delta_i : R(-1) \to R$ is a graded $\sigma_i$-derivation for all $1\leq i  \leq n$,
where $\sigma_i$ and $\delta_i$ are as in Proposition
\ref{sigmadefinition}.
\item[\rm (ii)]  $x_jx_i-c_{i,j}x_ix_j\in R_2+R_1x_1 +\cdots + R_1x_n$, as in (\ref{sigmadefinicion2}) and $c_{i,j}\in R_0$.
\end{enumerate}
For $p\geq 0$, let $A_p$ the $\mathbb{K}$-space generated by the set
\[\Bigl\{r_tx^{\alpha} \mid t+|\alpha|= p,\  r_t\in R_t \text{  and } x^{\alpha}\in {\rm
Mon}(A)\Bigr\}.
\]
Then $A$ is a $\mathbb{N}$-graded algebra with graduation
 $ A=\bigoplus_{p\geq 0} A_p$. 
\end{proposition}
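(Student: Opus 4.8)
The plan is to verify the two defining properties of an $\mathbb{N}$-grading separately: that $A=\bigoplus_{p\geq 0}A_p$ as $\mathbb{K}$-vector spaces, and that $A_pA_q\subseteq A_{p+q}$ for all $p,q\geq 0$. For the first, I would use that $A$ is a free left $R$-module on ${\rm Mon}(A)$, so $A=\bigoplus_{\alpha\in\mathbb{N}^n}Rx^{\alpha}$ as $\mathbb{K}$-spaces; feeding in $R=\bigoplus_{t\geq 0}R_t$ gives $A=\bigoplus_{\alpha,t}R_tx^{\alpha}$, and collecting the summands with $t+|\alpha|=p$ identifies $A_p$ with $\bigoplus_{t+|\alpha|=p}R_tx^{\alpha}$. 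In particular the $A_p$ are in direct sum and span $A$. I would also record at this point the elementary fact $R_aA_b\subseteq A_{a+b}$, immediate from this description together with $R_aR_t\subseteq R_{a+t}$.

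For multiplicativity, the first reduction is that it suffices to prove two straightening statements: (S1) $x^{\alpha}R_t\subseteq A_{|\alpha|+t}$ for every standard monomial $x^{\alpha}$ and every $t$, and (S2) $x^{\alpha}x^{\beta}\in A_{|\alpha|+|\beta|}$ for all $x^{\alpha},x^{\beta}\in{\rm Mon}(A)$; granting these, an arbitrary product $(r_sx^{\alpha})(r_tx^{\beta})=r_s(x^{\alpha}r_t)x^{\beta}$ is pushed into $A_{s+|\alpha|+t+|\beta|}$ by applying (S1), then (S2), then $R_aA_b\subseteq A_{a+b}$. Both (S1) and (S2) will be proved by induction on $|\alpha|$: writing $x^{\alpha}=x_ix^{\gamma}$ with $x^{\gamma}\in{\rm Mon}(A)$ of degree $|\alpha|-1$, the inductive step reduces to showing $x_i\cdot A_N\subseteq A_{N+1}$, and here condition (i) enters, giving $x_iR_u\subseteq\sigma_i(R_u)x_i+\delta_i(R_u)\subseteq R_ux_i+R_{u+1}\subseteq A_{u+1}$; combined with $R_aA_b\subseteq A_{a+b}$ the only thing left to control is a single variable times a standard monomial, i.e. $x_ix^{\beta}\in A_{1+|\beta|}$.

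That last point is the heart of the argument and the main obstacle. A naive induction on $|\beta|$ is circular: after rewriting $x_ix^{\beta}$ with the commutation rule $x_ix_k=c_{k,i}x_kx_i+w$, where $k=\min{\rm supp}(\beta)$, $c_{k,i}\in R_0$ by condition (ii), and $w\in R_2+R_1x_1+\cdots+R_1x_n\subseteq A_2$ has strictly smaller degree in the variables, one is left with a term $c_{k,i}x_kx_ix^{\beta-e_k}$ in which, after expanding $x_ix^{\beta-e_k}$, a top-degree monomial reappears multiplied on the left by $x_k$ — an instance of the very shape being proved. The fix I would use is to strengthen the induction hypothesis: prove simultaneously that $x_ix^{\beta}\in A_{1+|\beta|}$ and that the component of $x_ix^{\beta}$ of maximal degree in $x_1,\dots,x_n$ is exactly $c\,x^{\beta+e_i}$ for some coefficient $c\in R_0$. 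With this refinement the reappearing term is $x_kx^{\beta-e_k+e_i}$, and since $\min{\rm supp}(\beta-e_k+e_i)\geq k$ this is already a standard monomial of degree $|\beta|+1$, requiring no further straightening; every other term produced has strictly smaller degree in the $x$'s and lies in $A_{1+|\beta|}$ by the inductive hypothesis for smaller $|\beta|$ together with $R_aA_b\subseteq A_{a+b}$ and condition (i). (Here it is essential that $c_{k,i}\in R_0$, that $\delta_i$ raises degree by exactly one, and that the correction term of the quadratic relation sits in $R_2+R_1x_1+\cdots+R_1x_n$ — exactly hypotheses (i) and (ii).)

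Once $x_ix^{\beta}\in A_{1+|\beta|}$ is established, (S1) and (S2) follow from the induction on $|\alpha|$ described above, and then $A_pA_q\subseteq A_{p+q}$ follows from the reduction in the second paragraph. Combined with the direct-sum decomposition of the first paragraph, this shows that $A=\bigoplus_{p\geq 0}A_p$ is an $\mathbb{N}$-graded algebra, which is the assertion. The only genuinely delicate point is the strengthened single-variable lemma; the remaining steps are bookkeeping with the degree function and the two reductions.
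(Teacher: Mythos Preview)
Your proof is correct and follows the same overall plan as the paper's: first get the direct-sum decomposition from the free left $R$-basis ${\rm Mon}(A)$ together with $R=\bigoplus_t R_t$, then verify $A_pA_q\subseteq A_{p+q}$ by straightening a generic product $(r_sx^{\alpha})(r_tx^{\beta})$. The difference is in how the straightening of monomial products is organized. The paper invokes the general straightening identities $x^{\alpha}r=\sigma^{\alpha}(r)x^{\alpha}+p_{\alpha,r}$ and $x^{\alpha}x^{\beta}=c_{\alpha,\beta}x^{\alpha+\beta}+p_{\alpha,\beta}$ from \cite{LezamaGallego}, then checks by an explicit degree-$3$ computation that $x_kx_jx_i\in A_3$, says ``following this procedure we get $x_{i_1}\cdots x_{i_m}\in A_m$,'' and refers back to the proof of Theorem~7 in \cite{LezamaGallego} to conclude that $p_{\alpha,r_s}$ and $p_{\alpha,\beta}$ land in the correct $A_p$. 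Your route replaces this by a self-contained induction on $|\beta|$ for the single-variable lemma $x_ix^{\beta}\in A_{1+|\beta|}$, with the strengthened hypothesis that the top component equals $c\,x^{\beta+e_i}$ for some $c\in R_0$; this is exactly what is needed to avoid circularity when the reappearing term $x_k x^{\beta-e_k+e_i}$ shows up. What your approach buys is independence from the external reference and a transparent inductive structure; what the paper's buys is that the reader sees concretely, in the degree-$3$ display, how hypotheses (i) and (ii) act on each type of term. One small point worth making explicit in your write-up: the commutation $x_ix_k=c_{k,i}x_kx_i+w$ is only needed when $i>k$ (for $i\leq k$ the product $x_ix^{\beta}$ is already standard), and it is this inequality that guarantees $\min\operatorname{supp}(\beta-e_k+e_i)\geq k$.
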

\begin{proof}
It is clear that $1=x^0_1\cdots x^0_n\in A_0$. Let $f\in A\
\backslash\ \{0\}$, then by Remark \ref{rem.grad f y rep}-(ii), $f$
has a unique representation as $f=r_1X_1+\cdots+r_sX_s$, with
$r_i\in R\ \backslash\ \{0\}$ and $X_i:=x_1^{\alpha_{i_1}}\cdots
x_n^{\alpha_{i_n}}\in {\rm Mon}(A)$ for $1\leq i\leq s$. Let $r_i=
r_{i_{q_1}}+ \cdots + r_{i_{q_m}}$ the unique representation of
$r_i$ in homogeneous elements of $R$. Then $f= (r_{1_{q_1}}+ \cdots
+ r_{1_{q_m}}) x_1^{\alpha_{1_1}}\cdots x_n^{\alpha_{1_n}}+\cdots +
(r_{s_{q_1}}+ \cdots + r_{s_{q_u}})x_1^{\alpha_{s_1}}\cdots
x_n^{\alpha_{s_n}}= r_{1_{q_1}} x_1^{\alpha_{1_1}}\cdots
x_n^{\alpha_{1_n}}+ \cdots + r_{1_{q_m}} x_1^{\alpha_{1_1}}\cdots
x_n^{\alpha_{1_n}}+ \cdots + r_{s_{q_1}}x_1^{\alpha_{s_1}}\cdots
x_n^{\alpha_{s_n}} + \cdots + r_{s_{q_u}}x_1^{\alpha_{s_1}}\cdots
x_n^{\alpha_{s_n}}$ is  the unique representation of $f$ in
homogeneous elements of $A$. Therefore $A$ is a direct sum of the
family $\{A_p\}_{p\geq 0}$ of subspaces of $A$.

Now, let $x\in A_pA_q$. Without loss of generality we can assume
that  $x= (r_tx^{\alpha})(r_sx^{\beta})$ with $r_t\in R_t$,  $r_s\in
R_s$, $ x^{\alpha},  x^{\beta}\in {\rm Mon}(A)$, $t+|\alpha|= p$ and
$s+|\beta|= q$. By Remark \ref{rem.grad f y rep}-(iii), we have that
for $r_s$ and $x^{\alpha}$ there exist unique elements
$r_{s_{\alpha}}:=\sigma^{\alpha}(r_s)\in R \ \backslash\ \{0\}$ and
$p_{\alpha, r_s}\in A$ such that $x= r_t(r_{s_{\alpha}}x^{\alpha} +
p_{\alpha, r_s})x^{\beta}= r_tr_{s_{\alpha}}x^{\alpha}x^{\beta}+
r_tp_{\alpha, r_s}x^{\beta}$, where $p_{\alpha, r_s}= 0$ or
deg$(p_{\alpha, r_s}) < |\alpha|$ if $p_{\alpha, r_s}\neq 0$. Now,
by Remark \ref{rem.grad f y rep}-(iv), we have that for
$x^{\alpha}$, $x^{\beta}$ there exists unique elements
$c_{\alpha,\beta}\in R$ and $p_{\alpha,\beta}\in A$ such that
$x=r_tr_{s_{\alpha}}(c_{\alpha,\beta}x^{\alpha+\beta} +
p_{\alpha,\beta})+ r_tp_{\alpha,
r_s}x^{\beta}=r_tr_{s_{\alpha}}c_{\alpha,\beta}x^{\alpha+\beta} +
r_tr_{s_{\alpha}}p_{\alpha,\beta}+ r_tp_{\alpha, r_s}x^{\beta}$,
where $c_{\alpha,\beta}$ is left invertible, $p_{\alpha,\beta}= 0$
or deg$(p_{\alpha,\beta}) <|\alpha + \beta|$ if
$p_{\alpha,\beta}\neq 0$. We note that:
\begin{enumerate}
\item Since $\sigma_i$ is graded  for $1\leq i\leq n$, then $\sigma_i^{\alpha_i}$ is graded and therefore $\sigma^{\alpha}$ is graded.
Then $r_{s_{\alpha}}:=\sigma^{\alpha}(r_s)\in R_s$ and
$\delta_i^{\alpha_i}(r_s)\in R_{s+\alpha_i}$, for $1\leq i\leq n$
and $\alpha_i\geq 0$.
\item If $W[\delta_i^\nu\sigma_i^{\alpha_{i}-\nu}]$ represents the sum of
the possible words that can be constructed with the alphabet
composed of $\nu$ times the symbol $\delta_i$  and $\alpha_i-\nu$
times the symbol $\sigma_i$,  then $x_i^{\alpha_i}r_s
=\sum_{\nu=0}^{\alpha_i}W[\delta_i^\nu\sigma_i^{\alpha_{i}-\nu}](r_s)x_i^{\alpha_i-\nu}\in
A_{s+\alpha_i}$,  since each summand in the above expression is in
$A_{s+\alpha_i}$.
\item From condition {\rm (ii)}, we have that for $1\leq i< j\leq n$, $x_jx_i=c_{i,j}x_ix_j+ r_{0_{ij}} + r_{1_{ij}}x_1
+ \cdots + r_{n_{ij}}x_n\in A_2$. Then, for  $1\leq i< j< k\leq n$,
we have that

     $ x_k(x_jx_i)= x_k(c_{i,j}x_ix_j+ r_{0_{ij}} + r_{1_{ij}}x_1 + \cdots + r_{n_{ij}}x_n)$\\
     $=  (\sigma_k(c_{i,j})x_kx_ix_j+\delta_k(c_{i,j})x_ix_j) + (\sigma_k(r_{0_{ij}})x_k +\delta_k(r_{0_{ij}}))$\\
     \text{\quad} $+ (\sigma_k(r_{1_{ij}})x_kx_1 +\delta_k(r_{1_{ij}})x_1)+\cdots
    + (\sigma_k(r_{n_{ij}})x_kx_n +\delta_k(r_{n_{ij}})x_n)$\\
     $= \sigma_k(c_{i,j})[c_{i,k}x_ix_k+ r_{0_{ik}} + r_{1_{ik}}x_1 + \cdots + r_{n_{ik}}x_n]x_j +\delta_k(c_{i,j})x_ix_j + \sigma_k(r_{0_{ij}})x_k$\\
      \text{\quad} $+\delta_k(r_{0_{ij}}) + \sigma_k(r_{1_{ij}})[c_{1,k}x_1x_k + r_{0_{1k}} + r_{1_{1k}}x_1 + \cdots + r_{n_{1k}}x_n ]$\\
      \text{\quad} $+\delta_k(r_{1_{ij}})x_1+\cdots + \sigma_k(r_{n_{ij}})x_kx_n+\delta_k(r_{n_{ij}})x_n$\\
        $=  \sigma_k(c_{i,j})c_{i,k}x_i[c_{j,k}x_jx_k + r_{0_{jk}} + r_{1_{jk}}x_1 + \cdots + r_{n_{jk}}x_n] + \sigma_k(c_{i,j}) r_{0_{ik}}x_j$\\
   \text{\quad} $ + \sigma_k(c_{i,j}) r_{1_{ik}}x_1x_j + \cdots  + \sigma_k(c_{i,j}) r_{n_{ik}}x_nx_j  +\delta_k(c_{i,j})x_ix_j + \sigma_k(r_{0_{ij}})x_k$\\
    \text{\quad} $+\delta_k(r_{0_{ij}}) + \sigma_k(r_{1_{ij}})[c_{1,k}x_1x_k  + r_{0_{1k}} + r_{1_{1k}}x_1 + \cdots   + r_{n_{1k}}x_n ]$\\
     \text{\quad} $ +\delta_k(r_{1_{ij}})x_1+\cdots      + \sigma_k(r_{n_{ij}})x_kx_n+\delta_k(r_{n_{ij}})x_n$\\
     $= \sigma_k(c_{i,j})c_{i,k}\sigma_i(c_{j,k})x_ix_jx_k+ \sigma_k(c_{i,j})c_{i,k}\delta_i(c_{j,k})x_jx_k + \sigma_k(c_{i,j})c_{i,k}\sigma(c_{j,k})\sigma_i(r_{0_{ij}})x_i$ \\
   \text{\quad} $  + \sigma_k(c_{i,j})c_{i,k}\delta_i(r_{0_{ij}}) + \sigma_k(c_{i,j})c_{i,k}\sigma_i(r_{1_{jk}})x_ix_1 +   \sigma_k(c_{i,j})c_{i,k}\delta_i(r_{1_{jk}})x_1+ \cdots $\\
     \text{\quad} $  + \sigma_k(c_{i,j})c_{i,k}\sigma_i(r_{n_{jk}})x_ix_n + \sigma_k(c_{i,j})c_{i,k}\delta_i(r_{n_{jk}})x_n + \sigma_k(c_{i,j}) r_{0_{ik}}x_j$\\
    \text{\quad} $ +  \sigma_k(c_{i,j}) r_{1_{ik}}x_1x_j + \cdots + \sigma_k(c_{i,j}) r_{n_{ik}}x_nx_j +\delta_k(c_{i,j})x_ix_j + \sigma_k(r_{0_{ij}})x_k  +\delta_k(r_{0_{ij}}) $\\
    \text{\quad} $ + \sigma_k(r_{1_{ij}})c_{1,k}x_1x_k + \sigma_k(r_{1_{ij}})r_{0_{1k}} +\sigma_k(r_{1_{ij}}) r_{1_{1k}}x_1 + \cdots + \sigma_k(r_{1_{ij}})r_{n_{1k}}x_n $ \\
    \text{\quad} $ +\delta_k(r_{1_{ij}})x_1+\cdots + \sigma_k(r_{n_{ij}})x_kx_n+\delta_k(r_{n_{ij}})x_n$\\
     $=  \sigma_k(c_{i,j})c_{i,k}\sigma_i(c_{j,k})x_ix_jx_k+ \sigma_k(c_{i,j})c_{i,k}\delta_i(c_{j,k})x_jx_k + \sigma_k(c_{i,j})c_{i,k}\sigma(c_{j,k})\sigma_i(r_{0_{ij}})x_i $\\
   \text{\quad} $   + \sigma_k(c_{i,j})c_{i,k}\delta_i(r_{0_{ij}}) + \sigma_k(c_{i,j})c_{i,k}\sigma_i(r_{1_{jk}})c_{1,i}x_1x_i + \sigma_k(c_{i,j})c_{i,k}\sigma_i(r_{1_{jk}})r_{0_{1i}}$\\
     \text{\quad} $ + \sigma_k(c_{i,j})c_{i,k}\sigma_i(r_{1_{jk}})r_{1_{1i}}x_1 + \cdots   + \sigma_k(c_{i,j})c_{i,k}\sigma_i(r_{1_{jk}}) r_{n_{1i}}x_n$ \\
    \text{\quad} $   +    \sigma_k(c_{i,j})c_{i,k}\delta_i(r_{1_{jk}})x_1+ \cdots   + \sigma_k(c_{i,j})c_{i,k}\sigma_i(r_{n_{jk}})x_ix_n +
    \sigma_k(c_{i,j})c_{i,k}\delta_i(r_{n_{jk}})x_n$\\
     \text{\quad} $ + \sigma_k(c_{i,j}) r_{0_{ik}}x_j  +  \sigma_k(c_{i,j}) r_{1_{ik}}x_1x_j + \cdots +  \sigma_k(c_{i,j}) r_{n_{ik}}c_{j,n}x_jx_n +
      \sigma_k(c_{i,j}) r_{n_{ik}}\delta_n(x_j)$\\
     \text{\quad} $ + \sigma_k(c_{i,j}) r_{n_{ik}}r_{0_{jn}}+\sigma_k(c_{i,j}) r_{n_{ik}}r_{1_{jn}}x_1+\cdots + \sigma_k(c_{i,j}) r_{n_{ik}}r_{n_{jn}}x_n + \delta_k(c_{i,j})x_ix_j$\\
      \text{\quad} $ + \sigma_k(r_{0_{ij}})x_k  +\delta_k(r_{0_{ij}})
     + \sigma_k(r_{1_{ij}})c_{1,k}x_1x_k + \sigma_k(r_{1_{ij}})r_{0_{1k}} +\sigma_k(r_{1_{ij}}) r_{1_{1k}}x_1 + \cdots$\\
      \text{\quad} $ + \sigma_k(r_{1_{ij}})r_{n_{1k}}x_n
      + \delta_k(r_{1_{ij}})x_1+ \cdots + \sigma_k(r_{n_{ij}})x_kx_n+\delta_k(r_{n_{ij}})x_n$.

Since all summands in the above sum have the form $rx$, where $r$ is
an homogeneous element of $R$, $x\in {\rm Mon}(A)$ and $rx\in A_3$,
we have that  $x_kx_jx_i\in A_3$. Following this procedure we get in
general that $x_{i_1}x_{i_2}\cdots x_{i_m}\in A_m$ for $1\leq
i_k\leq n$, $1\leq k \leq m$, $m\geq 1$.
\item In a similar way and following the proof of Theorem 7, in \cite{LezamaGallego}, we obtain that
$p_{\alpha, r_s}\in  A_{|\alpha| +s}$ and $p_{\alpha,\beta}\in
A_{|\alpha|+|\beta|}$. Then
$r_tr_{s_{\alpha}}c_{\alpha,\beta}x^{\alpha+\beta}\in
A_{t+s+|\alpha|+|\beta|}$, $r_tr_{s_{\alpha}}p_{\alpha,\beta}\in
A_{t+s+|\alpha|+|\beta|}$ and $r_tp_{\alpha, r_s}x^{\beta}\in
A_{t+|\alpha|+ s+|\beta|}$, i.e., $x\in A_{p+q}$.
\end{enumerate}
\end{proof}

\begin{definition}[\cite{Suarez}, Definition 2.6]\label{def. graded skew PBW ext} Let  $A=\sigma(R)\langle x_1,\dots, x_n\rangle$ be a bijective
skew PBW extension of a  $\mathbb{N}$-graded algebra
$R=\bigoplus_{m\geq 0}R_m$. We say that $A$ is a \emph{graded  skew
PBW extension} if $A$ satisfies the conditions (i) and (ii) in
Proposition \ref{prop.grad A}.
\end{definition}
Note that the family of  graded iterated Ore extensions is strictly contained in the family  of graded skew PBW extensions (see \cite{Suarez}, Remark 2.11).

\begin{proposition}\label{prop.quasi is grad}
Quasi-commutative skew PBW extensions with  the trivial graduation of  $R$ are graded  skew PBW extensions.  If we assume that  $R$  has a different graduation to the trivial graduation, then $A$ is graded skew PBW extension  if and only if $\sigma_i$ is graded and $c_{i,j}\in R_0$,  for $1\leq i,j \leq n$.
\end{proposition}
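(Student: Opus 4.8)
The plan is to check everything directly against Definition~\ref{def. graded skew PBW ext}, that is, to verify conditions (i) and (ii) of Proposition~\ref{prop.grad A}, exploiting the simplifications that quasi-commutativity forces. Recall that a quasi-commutative skew PBW extension is of endomorphism type, so $\delta_i=0$ for all $i$ and $x_ir=\sigma_i(r)x_i$ for $r\in R$; moreover $x_jx_i=c_{i,j}x_ix_j$, so $x_jx_i-c_{i,j}x_ix_j=0$ for all $1\le i,j\le n$. Throughout I take $A$ to be bijective, as required for $A$ to be a graded skew PBW extension by Definition~\ref{def. graded skew PBW ext}. Thus, for such an $A$, condition~(i) of Proposition~\ref{prop.grad A} reduces to ``each $\sigma_i$ is graded'' (the requirement on $\delta_i$ is vacuous: the zero map $R(-1)\to R$ carries $R(-1)_m=R_{m-1}$ into $R_m$ and is trivially a $\sigma_i$-derivation), and condition~(ii) reduces to ``$c_{i,j}\in R_0$'' (the membership $x_jx_i-c_{i,j}x_ix_j\in R_2+R_1x_1+\cdots+R_1x_n$ holds automatically, the left-hand side being $0$).

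For the first assertion, suppose $R$ carries the trivial graduation, i.e.\ $R=R_0$ and $R_m=0$ for $m\ge 1$. Then $\sigma_i(R_0)=\sigma_i(R)\subseteq R=R_0$ and $\sigma_i(R_m)=\{0\}=R_m$ for $m\ge 1$, so each $\sigma_i$ is graded; and $c_{i,j}\in R=R_0$. By the reduction above, $A$ satisfies (i) and (ii) of Proposition~\ref{prop.grad A}, hence $A$ is a graded skew PBW extension.

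For the second assertion I would prove both implications from the same reduction. If $A$ is a graded skew PBW extension then, by Definition~\ref{def. graded skew PBW ext}, it satisfies (i) and (ii) of Proposition~\ref{prop.grad A}; (i) gives that each $\sigma_i$ is graded, and (ii) gives $c_{i,j}\in R_0$. Conversely, if each $\sigma_i$ is graded and each $c_{i,j}\in R_0$, then (i) holds (each $\sigma_i$ is a graded ring homomorphism by hypothesis, being already a ring homomorphism by Proposition~\ref{sigmadefinition}, and $\delta_i=0$ is a graded $\sigma_i$-derivation) and (ii) holds ($x_jx_i-c_{i,j}x_ix_j=0\in R_2+R_1x_1+\cdots+R_1x_n$ and $c_{i,j}\in R_0$ by hypothesis), so $A$ is a graded skew PBW extension.

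There is no genuine obstacle here; the only point needing attention is the bookkeeping of the first paragraph, namely recognising that for quasi-commutative $A$ the $\delta_i$-part of condition~(i) and the ``lies in $R_2+\sum_k R_1x_k$'' part of condition~(ii) are automatically satisfied, so that ``$A$ is a graded skew PBW extension'' collapses precisely to ``$A$ is bijective, each $\sigma_i$ is graded, and each $c_{i,j}\in R_0$'' --- after which both statements follow at once, the trivial graduation meeting the latter two conditions for free.
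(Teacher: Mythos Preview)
Your proof is correct and follows essentially the same approach as the paper's: both arguments observe that for quasi-commutative $A$ one has $\delta_i=0$ (so the derivation part of condition~(i) is automatic) and $x_jx_i-c_{i,j}x_ix_j=0$ (so the membership part of condition~(ii) is automatic), whence the definition collapses to ``each $\sigma_i$ graded and each $c_{i,j}\in R_0$,'' after which both assertions are immediate. Your write-up is slightly more explicit in isolating this reduction first and in flagging the bijectivity hypothesis, but the content is the same.
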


\begin{proof}
Let $R=R_0$ and $r\in R=R_0$. From (\ref{rel1.quasi}) we have that $x_ir=c_{i,r}x_i= \sigma_i(r)x_i$. So, $\sigma_i(r)=c_{i,r}\in R= R_0$ and $\delta_i=0$, for $1\leq i\leq n$. Therefore $\sigma_i$ is a graded ring homomorphism and $\delta_i : R(-1) \to R$ is a graded $\sigma_i$-derivation for all $1\leq i  \leq n$. From (\ref{rel2.quasi}) we have that $x_jx_i-c_{i,j}x_ix_j=0 \in R_2+R_1x_1 +\cdots + R_1x_n$ and $c_{i,j}\in R= R_0$. If $R$ has a nontrivial graduation, then the result  is obtained from de relations (\ref{rel1.quasi}), (\ref{rel2.quasi}) and Definition \ref{def. graded skew PBW ext}.
\end{proof}

\begin{examples}\label{ex.quasi} We present some  examples of graded quasi-commutative skew PBW extensions.
\begin{enumerate}
\item\label{Sklyanin} The  \emph{Sklyanin algebra} is the algebra
$S = \mathbb{K}\langle x, y, z\rangle/\langle ayx + bxy + cz^2, axz + bzx + cy^2, azy + byz + cx^2\rangle$, where $a, b, c\in \mathbb{K}$. If $c\neq 0$ then $S$ is not a skew PBW extension. If $c=0$ and $a,b\neq 0$ then in $S$: $yx = -\frac{b}{a}xy$; $zx =  -\frac{a}{b}xz$ and $zy =  -\frac{b}{a}yz$, therefore $S\cong\sigma(\mathbb{K})\langle x,y,z\rangle$ is a skew PBW extension of $\mathbb{K}$, and we call this algebra a \emph{particular Sklyanin algebra}. The particular Sklyanin algebra is graded quasi-commutative skew PBW extension of $\mathbb{K}$.
\item \label{ex.shift oper} Let $h\in \mathbb{K}$. The \emph{algebra of shift operators} is defined by $S_h:= \mathbb{K}[t][x_h; \sigma_h]$, where $\sigma_h(p(t)):=p(t-h)$. Notice that $x_ht = (t - h)x_h$ and for  $p(t) \in \mathbb{K}[t]$ we have $x_hp(t)=p(t-ih)x_h^i$. Thus, $S_h\cong \sigma(\mathbb{K}[t])\langle x_h\rangle$ is a graded quasi-commutative skew PBW extension of $\mathbb{K}[t]$, where $\mathbb{K}[t]$  is endowed with trivial graduation.
\item \label{ex.q-dilat oper} For a fixed $q \in \mathbb{K} - \{0\}$, the \emph{algebra of
linear partial $q$-dilation operators $H$}, with polynomial
coefficients,  is the free algebra $\mathbb{K}\langle t_1,\dots
,t_n, H^{(q)}_1, \dots, H^{(q)}_m\rangle$, $n \geq m$, subject to
the relations: $$t_jt_i = t_it_j,\quad 1 \leq i < j \leq n;$$
$$H^{(q)}_i t_i = qt_iH^{(q)}_i,\quad 1 \leq i \leq m;$$ $$H^{(q)}_j
t_i = t_iH^{(q)}_j,\quad i \neq j;$$  $$H^{(q)}_j H^{(q)}_i =
H^{(q)}_iH^{(q)}_j, \quad 1 \leq i < j\leq m.$$ The algebra $H$ is a
graded quasi-commutative skew PBW extension of
$\mathbb{K}[t_1,\dots, t_n]$, where $\mathbb{K}[t_1,\dots, t_n]$ is
endowed with usual graduation.
\item \label{ex.multipl Weyl} The
\emph{quantum polynomial ring} $\mathcal{O}_n(\lambda_{ji})$ is the
algebra  generated by $x_1,\dots,x_n$ subject to the relations:
$x_jx_i =\lambda_{ji}x_ix_j ,\ 1\leq i<j\leq n$, $\lambda_{ji}\in
\mathbb{K}\setminus \{0\}$. Thus
$\mathcal{O}_n(\lambda_{ji})\cong\sigma(\mathbb{K})\langle
x_1,\dotsc,x_n\rangle\cong \sigma(\mathbb{K}[x_1])\langle
x_2,\dotsc,x_n\rangle$.
\item \label{mult param quant aff}  Let $n\geq 1$ and $\textbf{q}$ be a  matrix $(q_{ij})_{n\times n}$ whit entries in a field $\mathbb{K}$  where
 $q_{ii} = 1$ y $q_{ij}q_{ji} = 1$ for all $1\leq i, j \leq n$. Then \emph{multi-parameter quantum affine $n$-space} $\mathcal{O}_{\textbf{q}}(\mathbb{K}^n)$ is defined  to be $\mathbb{K}-$algebra generated by
 $x_1,\cdots, x_n$ with the relations $x_jx_i = q_{ij}x_ix_j$, for all $1\leq i, j \leq n$.
\end{enumerate}
\end{examples}

Examples of graded skew PBW extensions  over commutative polynomial rings $R$ which are not quasi-commutative, and where $R$ has the usual graduation, can be found in \cite{Suarez}.

\begin{remark}[\cite{Suarez}, Remark 2.10]\label{rem.prop of graded skew} Let $A=\sigma(R)\langle x_1,\dots, x_n\rangle$ be a graded skew PBW extension. Then  we  have the following properties:
\begin{enumerate}
\item[\rm (i)] $A$ is a $\mathbb{N}$-graded algebra and  $A_0=R_0$.
\item[\rm (ii)] $R$ is connected if and only if $A$ is connected.
\item[\rm (iii)] If $R$ is finitely generated then $A$ is finitely generated.
\item[\rm(iv)] For {\rm (i)}, {\rm (ii)} and {\rm (iii)} above,  we have that if $R$ is a finitely graded algebra then $A$ is a finitely graded algebra.
\item[\rm (v)] If $R$ is locally finite, then $A$ as $\mathbb{K}$-algebra is a locally finite.
\item[\rm (vi)] $A$ as $R$-module  is locally finite.
\item[\rm(vii)] If $A$ is quasi-commutative and $R$ is concentrate in degree 0, then $A_0=R$.
\item[\rm(viii)] If $R$ is a homogeneous quadratic algebra then $A$ is a homogeneous quadratic algebra.
\item[\rm(ix)] If $R$ is finitely presented then $A$ is finitely presented.
\end{enumerate}
\end{remark}

\begin{proposition}[\cite{LezamaReyes}, Theorem 2.3]\label{1.3.3}
Let $A$ be a quasi-commutative skew PBW extension of a ring $R$.
\begin{enumerate}
\item[\rm (i)] $A$ is isomorphic to an
iterated Ore extension of endomorphism type $R[z_1;
\theta_1]\cdots[z_n; \theta_n]$, where $\theta_1=\sigma_1$; $\theta_j: R[z_1;\theta_1]\cdots[z_{j-1};\theta_{j-1}]\to
R[z_1;\theta_1]\cdots[z_{j-1};\theta_{j-1}]$ is such that
$\theta_j(z_i)=c_{i,j}z_i$ ($c_{i,j}\in R$ as in
(\ref{sigmadefinicion2})), $1 \leq i < j \leq n$ and
$\theta_i(r)=\sigma_i(r)$, for $r\in R$.
\item[\rm (ii)] If $A$ is bijective, then each $\theta_i$ in {\rm (i)} is  bijective.
\end{enumerate}
\end{proposition}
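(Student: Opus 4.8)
The plan is to prove (i) by induction on the number of variables, realizing at each stage the subalgebra $A_j\subseteq A$ generated by $R$ and $x_1,\dots,x_j$ as the iterated Ore extension $S_j:=R[z_1;\theta_1]\cdots[z_j;\theta_j]$, and then to read off (ii) from the construction.

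\textbf{Step 1 (compatibility identities).} For a quasi-commutative skew PBW extension the relations are $x_ir=\sigma_i(r)x_i$ and $x_jx_i=c_{i,j}x_ix_j$ (with $c_{i,i}=1$ by Remark \ref{rem.grad f y rep}(i)). Expanding $x_jx_ir$ and $x_jx_ix_k$ in two ways and comparing the coefficients of the basic monomials $x_ix_j$ and $x_kx_ix_j$ — legitimate since ${\rm Mon}(A)$ is a left $R$-basis — one obtains, for all $r\in R$ and all $1\le k<i<j\le n$,
\[
\sigma_j\sigma_i(r)\,c_{i,j}=c_{i,j}\,\sigma_i\sigma_j(r),\qquad
c_{i,j}\,\sigma_i(c_{k,j})\,c_{k,i}=\sigma_j(c_{k,i})\,c_{k,j}\,\sigma_k(c_{i,j}).
\]
These are the only facts about the structure constants that will be used.

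\textbf{Step 2 ($\theta_j$ is well defined, and bijective when $A$ is).} For fixed $j$ I would build $\theta_j$ on $S_{j-1}=R[z_1;\theta_1]\cdots[z_{j-1};\theta_{j-1}]$ by applying the universal property of Ore extensions iteratively: $\theta_j$ is $\sigma_j$ on $R$, and if it has been defined on $S_{i-1}$ with $\theta_j(z_k)=c_{k,j}z_k$ for $k<i$, then it extends to $S_i=S_{i-1}[z_i;\theta_i]$ as soon as the element $t:=c_{i,j}z_i\in S_i$ satisfies $t\,\theta_j(s)=\theta_j(\theta_i(s))\,t$ for every $s\in S_{i-1}$. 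Since both sides are multiplicative in $s$, it suffices to check this for $s\in R$ and $s=z_k$ ($k<i$), and those two cases are precisely the two identities of Step 1. Hence $\theta_j$ is a well-defined ring endomorphism of $S_{j-1}$ with $\theta_j|_R=\sigma_j$ and $\theta_j(z_i)=c_{i,j}z_i$. If $A$ is bijective, every $\sigma_i$ is bijective and every $c_{i,j}$ ($i<j$) is invertible, and the same construction applied to the assignment $r\mapsto\sigma_j^{-1}(r)$, $z_i\mapsto\sigma_j^{-1}(c_{i,j}^{-1})z_i$ produces a two-sided inverse of $\theta_j$, so $\theta_j$ is bijective; this yields (ii).

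\textbf{Step 3 (identification).} I prove $A_j\cong S_j$ by induction on $j$, the case $j=0$ being $A_0=R=S_0$. Assume there is an isomorphism $\varphi:S_{j-1}\xrightarrow{\ \sim\ }A_{j-1}$ fixing $R$ and sending $z_i\mapsto x_i$, and put $\bar\theta_j:=\varphi\theta_j\varphi^{-1}$. The relations $x_jr=\sigma_j(r)x_j$ and $x_jx_i=c_{i,j}x_ix_j$ say that $x_js=\bar\theta_j(s)x_j$ on a set of $s$ that is a subring containing $R$ and $x_1,\dots,x_{j-1}$, hence on all of $A_{j-1}$. Moreover, since ${\rm Mon}(A)$ is a left $R$-basis, $A_j$ is the free left $R$-module on $\{x_1^{\alpha_1}\cdots x_j^{\alpha_j}\}$, whence $A_j=\bigoplus_{m\ge0}A_{j-1}x_j^m$ is a free left $A_{j-1}$-module on $\{x_j^m\}$. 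These are exactly the defining properties of the Ore extension $A_{j-1}[z_j;\bar\theta_j]$, so $z_j\mapsto x_j$ extends $\varphi$ to an isomorphism $S_j=S_{j-1}[z_j;\theta_j]\cong A_{j-1}[z_j;\bar\theta_j]\xrightarrow{\ \sim\ }A_j$. Taking $j=n$ gives (i).

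\textbf{Main obstacle.} The technical heart is Step 2: verifying that $\theta_j$ respects every Ore relation of $S_{j-1}$. Once the cocycle-type identities of Step 1 have been isolated, this is essentially bookkeeping, but one must be careful about the order of the (generally non-commuting) coefficients $\sigma_\bullet(c_{\bullet,\bullet})\in R$; the remainder is a routine use of the universal property of Ore extensions and of the PBW basis.
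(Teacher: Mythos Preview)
The paper does not supply its own proof of this proposition: it is quoted verbatim from \cite{LezamaReyes}, Theorem~2.3, and simply used as input for the results that follow. So there is no internal argument to compare against.

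That said, your proposal is correct and is essentially the standard way such a statement is established. The identities in Step~1 are exactly what one gets by expanding $x_jx_ir$ and $x_jx_ix_k$ two ways against the PBW basis, and they are precisely what is needed in Step~2 to verify the universal-property condition $t\,\theta_j(s)=\theta_j(\theta_i(s))\,t$ on generators; the inductive identification in Step~3 via the freeness of $A_j$ over $A_{j-1}$ on $\{x_j^m\}$ is routine. The only place to be mildly careful is the bijectivity claim in Step~2: strictly speaking one must also check that the putative inverse $r\mapsto\sigma_j^{-1}(r)$, $z_i\mapsto\sigma_j^{-1}(c_{i,j}^{-1})z_i$ satisfies the same Ore compatibility, but this follows from the Step~1 identities after applying $\sigma_j^{-1}$ and using invertibility of the $c_{i,j}$, so the sketch goes through.
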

The following proposition  establishes the relation between graded
skew PBW extensions and graded iterated Ore extensions.

\begin{proposition}\label{prop.baseHilbert quasi} Let $A=\sigma(R)\langle x_1,\dots, x_n\rangle$ be a graded quasi-commutative skew PBW extension of $R$.
Then $A$  is isomorphic to a graded iterated Ore extension of
endomorphism type.
\end{proposition}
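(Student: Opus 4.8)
The plan is to combine Proposition~\ref{1.3.3} with Proposition~\ref{prop.quasi is grad}. By Proposition~\ref{1.3.3}(i), since $A$ is a quasi-commutative skew PBW extension, there is a ring isomorphism
\[
A \;\cong\; R[z_1;\theta_1]\cdots[z_n;\theta_n],
\]
where each $\theta_i$ is an endomorphism (and by bijectivity, part (ii), each $\theta_i$ is an automorphism). So the only thing left to check is that this iterated Ore extension is \emph{graded}, i.e.\ that under the grading inherited from $R$ (together with $\deg z_i = 1$) each $\theta_j$ is a graded endomorphism of $R[z_1;\theta_1]\cdots[z_{j-1};\theta_{j-1}]$ of degree $0$. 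I would first set up the grading on the iterated Ore extension explicitly: put $R = \bigoplus_{m\ge 0} R_m$ with its given grading and declare $z_i$ homogeneous of degree $1$; then argue inductively that $R[z_1;\theta_1]\cdots[z_j;\theta_j]$ is $\mathbb{N}$-graded provided $\theta_j$ preserves degree.

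The core verification is that each $\theta_j$ is graded. Recall from Proposition~\ref{1.3.3} that $\theta_1 = \sigma_1$, that $\theta_i|_R = \sigma_i$, and that $\theta_j(z_i) = c_{i,j} z_i$ for $1\le i < j \le n$. By Proposition~\ref{prop.quasi is grad} we know that in the graded quasi-commutative setting each $\sigma_i$ is graded and each $c_{i,j} \in R_0$ (this holds whether $R$ carries the trivial grading or a nontrivial one: in the trivial case $R = R_0$, so it is automatic, and in the nontrivial case it is exactly the hypothesis of Definition~\ref{def. graded skew PBW ext} that $A$ satisfies). Hence $\theta_j$ restricted to $R$ is $\sigma_j$, which is homogeneous of degree $0$, and $\theta_j(z_i) = c_{i,j} z_i$ has degree $1 = \deg z_i$ because $c_{i,j} \in R_0$. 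Since $\theta_j$ is a ring homomorphism and $R[z_1;\theta_1]\cdots[z_{j-1};\theta_{j-1}]$ is generated as a ring by $R$ and $z_1,\dots,z_{j-1}$, a generator whose images are all homogeneous of the same degree as the generators is graded of degree $0$; this is a short induction on $j$.

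With all $\theta_j$ shown to be graded automorphisms of degree $0$, the standard fact that an Ore extension $B[z;\theta]$ of an $\mathbb{N}$-graded ring $B$ by a degree-$0$ graded automorphism $\theta$ (with $\deg z = 1$) is again $\mathbb{N}$-graded, applied inductively, shows $R[z_1;\theta_1]\cdots[z_n;\theta_n]$ is a graded iterated Ore extension of endomorphism type. Finally, I would check that the isomorphism $A \cong R[z_1;\theta_1]\cdots[z_n;\theta_n]$ of Proposition~\ref{1.3.3} is in fact a graded isomorphism: it sends $R$ identically to $R$ and $x_i \mapsto z_i$, and under the grading on $A$ from Proposition~\ref{prop.grad A} (equivalently Remark~\ref{rem.prop of graded skew}), $x_i$ is homogeneous of degree $1$, matching $\deg z_i = 1$; since both algebras are generated in degrees $0$ and $1$ by corresponding elements, the isomorphism respects the gradings. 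I expect the main (though modest) obstacle to be the bookkeeping in the inductive step verifying that each $\theta_j$ is graded on the partially built Ore extension — one must be careful that the intermediate rings $R[z_1;\theta_1]\cdots[z_{j-1};\theta_{j-1}]$ really do inherit a well-defined $\mathbb{N}$-grading before one can even speak of $\theta_j$ being graded, so the induction must carry both statements (the ring is graded, and the next $\theta$ is a degree-$0$ graded map) simultaneously.
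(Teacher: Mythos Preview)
Your proposal is correct and follows essentially the same approach as the paper's proof: both invoke Proposition~\ref{1.3.3} to obtain the iterated Ore extension $R[z_1;\theta_1]\cdots[z_n;\theta_n]$ with bijective $\theta_i$, and then use that $\sigma_i$ is graded and $c_{i,j}\in R_0$ (coming from the graded hypothesis, as in Proposition~\ref{prop.quasi is grad}) to conclude each $\theta_j$ is a graded automorphism and $\deg z_i = 1$. Your write-up is more careful about the inductive bookkeeping and about the isomorphism being graded, but the underlying argument is the same.
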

\begin{proof}
By Proposition \ref{1.3.3}-(i) we have that $A$ is isomorphic to an
iterated Ore extension of endomorphism type $R[z_1;
\theta_1]\cdots[z_n; \theta_n]$, where $\theta_1=\sigma_1$,
$$\theta_j: R[z_1;\theta_1]\cdots[z_{j-1};\theta_{j-1}]\to
R[z_1;\theta_1]\cdots[z_{j-1};\theta_{j-1}]$$ is such that
$\theta_j(z_i)=c_{i,j}z_i$ ($c_{i,j}\in R$ as in
(\ref{sigmadefinicion2})), $1 \leq i < j \leq n$ and
$\theta_i(r)=\sigma_i(r)$, for $r\in R$.  Since $A$ is bijective
then by Proposition \ref{1.3.3}-(ii) $\theta_i$ is bijective. Since
$A$ is graded then $\sigma_i$ is graded and $c_{i,j}\in R_0$.
Moreover, since $\theta_i(r)=\sigma_i(r)$, then $\theta_i$ is a
graded automorphism for each $i$. Note that $z_i$ has graded 1 in
$A$, for all $i$. Thus, $A\cong R[z_1; \theta_1]\cdots[z_n;
\theta_n]$ is  a graded iterated Ore extension.
\end{proof}
\section{AS-Regular and Koszul algebras}

Let $B = \mathbb{K}\oplus B_1\oplus B_2\oplus \cdots$ be a finitely presented graded algebra over  $\mathbb{K}$. The algebra $B$ will be called \emph{AS-regular},  if $B$ has the
following properties:
\begin{enumerate}
\item [(i)] $B$ has finite global dimension $d$: every graded $B-$module has projective
dimension $\leq d$.
\item [(ii)] $B$ has finite $GK$-dimension.
\item  [(iii)] $B$ is \emph{Gorenstein}, meaning that ${\rm Ext}_B^i(\mathbb{K},
B) =0$ if $i \neq d$, and ${\rm Ext}^d_B(\mathbb{K}, B)\cong \mathbb{K}$
\end{enumerate}

\begin{proposition}[\cite{SuarezLezamaReyes2107-1}, Theorem 17]\label{prop. AS impl AS} Let $A=\sigma(R)\langle x_1, \dots, x_n\rangle$ be a graded quasi-commutative skew PBW extension.  If $R$ is AS-regular, then $A$ is AS-regular.
\end{proposition}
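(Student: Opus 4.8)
The plan is to reduce the statement to a single Ore extension step and then verify the three defining conditions of AS-regularity, the Gorenstein condition being the only one requiring work.

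By Proposition~\ref{prop.baseHilbert quasi}, $A$ is isomorphic as a graded algebra to a graded iterated Ore extension of endomorphism type
\[
A \cong R[z_1;\theta_1]\cdots[z_n;\theta_n],
\]
where each $\theta_i$ is a graded automorphism and each $z_i$ has degree $1$. Setting $B_0:=R$ and $B_j:=R[z_1;\theta_1]\cdots[z_j;\theta_j]$, it suffices to prove the inductive step: if $B$ is a connected, finitely presented, graded AS-regular $\mathbb{K}$-algebra of global dimension $d$ and $\theta$ is a graded automorphism of $B$, then $S:=B[z;\theta]$ with $\deg z=1$ is AS-regular of global dimension $d+1$; induction on $j$ then gives the conclusion for $A\cong B_n$. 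The hypotheses propagate along the induction: $(B[z;\theta])_0=B_0=\mathbb{K}$, so connectedness is preserved, and $B[z;\theta]$ is finitely presented whenever $B$ is (adjoin the generator $z$ and the finitely many relations $zr-\theta(r)z$, $r$ ranging over a generating set of $B$).

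For the inductive step I would handle the three conditions separately. Since $S$ is free both as a left and as a right $B$-module on the homogeneous basis $\{z^m\}_{m\ge 0}$, the standard facts on Ore extensions give $\operatorname{gldim} S=\operatorname{gldim} B+1=d+1<\infty$ and $\operatorname{GKdim} S=\operatorname{GKdim} B+1<\infty$, so conditions (i) and (ii) hold. For the Gorenstein condition (iii), the crucial input is the short exact sequence of graded left $S$-modules
\[
0\longrightarrow S(-1)\xrightarrow{\ \cdot z\ } S\longrightarrow B\longrightarrow 0,
\]
which is simultaneously a length-one free resolution of ${}_SB$ (injectivity of $\cdot z$ follows from freeness of $S$ over $B$). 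Applying $\operatorname{Hom}_S(-,S)$ and using that multiplication by $z$ stays injective on $S$, one obtains $\operatorname{Ext}^0_S(B,S)=0$, $\operatorname{Ext}^1_S(B,S)\cong B(1)$ (as a left $B$-module, carrying some $\theta$-twist on the remaining structure), and $\operatorname{Ext}^i_S(B,S)=0$ for $i\ge 2$. Feeding this into the change-of-rings spectral sequence attached to $S\twoheadrightarrow B$,
\[
\operatorname{Ext}^p_B\bigl(\mathbb{K},\operatorname{Ext}^q_S(B,S)\bigr)\ \Longrightarrow\ \operatorname{Ext}^{p+q}_S(\mathbb{K},S),
\]
only the row $q=1$ survives, the sequence collapses, and
\[
\operatorname{Ext}^{i}_S(\mathbb{K},S)\ \cong\ \operatorname{Ext}^{i-1}_B\bigl(\mathbb{K},B(1)\bigr)
\]
as graded $\mathbb{K}$-spaces (a graded automorphism twist of $B$ is isomorphic to $B$ as a left $B$-module, so it does not affect this computation). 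Since $B$ is AS-Gorenstein of dimension $d$, the right-hand side vanishes for $i\ne d+1$ and is one-dimensional, concentrated in a single internal degree, for $i=d+1$; hence $S$ is AS-Gorenstein of dimension $d+1$, which closes the induction. Alternatively, one may simply invoke the fact already recorded in the Introduction that a graded iterated Ore extension of an AS-regular algebra is AS-regular, see \cite{Liu1}.

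I expect the main obstacle to be the bookkeeping in the Gorenstein step: pinning down the bimodule $\operatorname{Ext}^1_S(B,S)$ together with its degree shift and $\theta$-twist, and verifying that the spectral sequence genuinely degenerates so that the one-dimensionality and single-degree concentration of $\operatorname{Ext}^d_B(\mathbb{K},B)$ transfer faithfully to $\operatorname{Ext}^{d+1}_S(\mathbb{K},S)$.
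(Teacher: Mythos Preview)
The paper does not actually prove this proposition: it is stated with a citation to \cite{SuarezLezamaReyes2107-1}, Theorem 17, and the only argument in the present paper is the sketch in the Introduction---reduce $A$ to a graded iterated Ore extension via Proposition~\ref{prop.baseHilbert quasi} and then invoke \cite{Liu1} for the fact that graded Ore extensions preserve AS-regularity. Your proposal follows exactly this route and goes further by spelling out the single Ore-extension step (global dimension, GK-dimension, and the Gorenstein condition via the change-of-rings spectral sequence); the ``alternatively'' sentence at the end of your proof is, in effect, the entirety of what the paper records.
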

Let $B$ be a finitely graded algebra and let $M$, $N$ be $\mathbb{Z}$-graded
$B$-modules. Then there is a natural inclusion $\underline{{\rm Hom}}_B(M,N)\hookrightarrow {\rm Hom}_B(M,N)$. If $M$ is an $B$-module  finitely generated, then
$\underline{{\rm Hom}}_B(M,N)\cong {\rm Hom}_B(M,N)$ and $\underline{{\rm Ext}}^i_B(M,N)\cong {\rm Ext}^i_B(M,N)$. A graded algebra $B$ is \emph{quadratic} if $B = T(V)/\langle R\rangle$ where $V$ is a finite dimensional $\mathbb{K}$-vector space concentrated in degree l, $T(V)$ is the tensor algebra on $V$, with the induced grading and $\langle R\rangle$ is the ideal generated by a subspace $R\subseteq V\otimes V$. The \emph{dual} of such a quadratic algebra is $B^{!}:= T(V^{*})/\langle R^{\bot}\rangle$, where $R^{\bot} = \{\lambda\in V^{*}\otimes V^{*}\mid \lambda(r) = 0 \text{ for all } r\in R\}$. We identify $(V\otimes V)^{*}$ with $V^{*}\otimes V^{*}$ by defining $(\alpha\otimes\beta)(u\otimes v):=\alpha(u)\beta(v)$ for $\alpha,\beta\in V^{*}$ and $u, v\in V$.\\

Let $B=\mathbb{K}\bigoplus B_1\bigoplus B_2\bigoplus\cdots$ be a locally finite graded algebra  and
$E(B) =\bigoplus_{s,p}E^{s,p}(B) = \bigoplus_{s,p}{\rm Ext}^{s,p}_B(\mathbb{K}, \mathbb{K})$ the
associated bigraded Yoneda algebra, where $s$ is the cohomology degree  and $-p $ is the internal
degree inherited from the grading on $B$. Let $E^{s}(B) = \bigoplus_{p}E^{s,p}(B)$. $B$  is said to be
\emph{Koszul} if the following equivalent conditions hold:
\begin{enumerate}
\item[\rm (i)]  ${\rm Ext}^{s,p}_B(\mathbb{K}, \mathbb{K})=0$ for $s\neq p$;
\item[\rm (ii)] $B$ is one-generated and the algebra  ${\rm Ext}^*_B (\mathbb{K}, \mathbb{K})$ is generated by  ${\rm Ext}_B^{1}(\mathbb{K}, \mathbb{K})$, i.e., $E(B)$ is generated in the first cohomological degree;
\item[\rm (iii)] The module $\mathbb{K}$ admits a \emph{linear free resolution}, i.e., a resolution by free $B$-modules
\[
\cdots \to P_2\to P_1\to P_0\to \mathbb{K}\to 0,
\]
such that $P_i$ is generated in degree $i$.
\item[\rm(iv)] $\underline{{\rm Ext}}^{*}(\mathbb{K}, \mathbb{K}) \cong B^{!}$ as graded $\mathbb{K}$-algebras.
\end{enumerate}

The following theorem was proved by the first author in \cite{Suarez}.
\begin{theorem}\label{teo.Kosz impl Kosz} Let $A=\sigma(R)\langle x_1, \dots, x_n\rangle$ be a  graded skew PBW extension.
\begin{enumerate}
\item[\rm (i)] The graded iterated Ore extension $A := R[x_1; \sigma_1,\delta_1]\cdots [x_{n}; \sigma_{n},\delta_{n}]$ is Koszul if and only if $R$ is Koszul {\rm (\cite{Suarez}}, Proposition 3.1{\rm )}.
\item[\rm (ii)]  If $A$ is quasi-commutative, then $R$ is  Koszul  if and only if $A$  is Koszul {\rm (\cite{Suarez}}, Proposition 3.3{\rm )}.
\item[\rm (iii)] Let $R$ be  a finitely presented algebra. If  $R$ is a PBW
algebra then $A$ is Koszul algebra {\rm (\cite{Suarez}}, Corollary 4.4{\rm )}.
\item[\rm (iv)] If $R$ is a finitely presented Koszul algebra, then  $A$ is Koszul {\rm (\cite{Suarez},}  Theorem 5.5{\rm )}.
\end{enumerate}
\end{theorem}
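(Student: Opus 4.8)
All four items are the cited results of the first author (\cite{Suarez}), and the plan is to reproduce their strategy: push Koszulity questions down to the ground ring $R$ one variable at a time. So I would first settle the single graded Ore extension $B[y;\sigma]$ of endomorphism type, where $B$ is connected graded, $\sigma$ a graded automorphism and $\deg y=1$. Here $y$ is normal, $yB[y;\sigma]=B[y;\sigma]y$, $B[y;\sigma]/(y)\cong B$, and there is a short exact sequence of graded left $B[y;\sigma]$-modules
\[
0\longrightarrow B[y;\sigma](-1)\xrightarrow{\ \cdot y\ }B[y;\sigma]\longrightarrow B\longrightarrow 0 ,
\]
from which the Cartan--Eilenberg change-of-rings spectral sequence ${\rm Tor}^{B}_{p}\!\bigl(\mathbb{K},{\rm Tor}^{B[y;\sigma]}_{q}(B,\mathbb{K})\bigr)\Rightarrow{\rm Tor}^{B[y;\sigma]}_{p+q}(\mathbb{K},\mathbb{K})$ has $E_2$-page concentrated in $q\in\{0,1\}$, with $E_2^{p,0}={\rm Tor}^{B}_{p}(\mathbb{K},\mathbb{K})$ and $E_2^{p,1}={\rm Tor}^{B}_{p}(\mathbb{K},\mathbb{K})(-1)$; tracking internal degrees on that page (the only differential is $d_2$, which is degree-preserving) shows ${\rm Tor}^{B[y;\sigma]}_{*}(\mathbb{K},\mathbb{K})$ is concentrated on the diagonal iff ${\rm Tor}^{B}_{*}(\mathbb{K},\mathbb{K})$ is. Iterating over the variables gives that a graded iterated Ore extension $R[z_1;\theta_1]\cdots[z_n;\theta_n]$ of endomorphism type is Koszul iff $R$ is, and part (ii) is then immediate, since Proposition \ref{prop.baseHilbert quasi} presents any graded quasi-commutative skew PBW extension of $R$ as exactly such a graded iterated Ore extension.

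For (i) with derivations present I would filter $B[y;\sigma,\delta]$ by $y$-degree: this filtration is exhaustive, bounded below and degreewise finite, and since $\delta$ strictly lowers $y$-degree, ${\rm gr}\,B[y;\sigma,\delta]\cong B[y;\sigma]$ with the same Hilbert series. Because Koszulity of a connected graded algebra ascends from such an associated graded algebra (the spectral sequence of the filtered bar complex preserves internal degree while shifting homological degree, so diagonal concentration ascends), the endomorphism-type case above yields ``$R$ Koszul $\Rightarrow A$ Koszul'' after an induction; for the converse in this generality I would invoke the quadratic-dual/distributivity characterization of Koszulity, noting that the defining-relation lattices of $R$ occur as sublattices of those of $A$ and that a sublattice of a distributive lattice is distributive.

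For (iii) I would use Priddy's theorem — a finitely presented PBW algebra is Koszul — so it suffices to produce a PBW basis of $A$: extend a degree-compatible admissible order on $R$ to $A$ with the $x_i$ largest, take as generating relations those of $R$ together with the straightening rules $x_ir-\sigma_i(r)x_i-\delta_i(r)$ and $x_jx_i-c_{i,j}x_ix_j-(\text{terms of degree}\le 1)$ from Definition \ref{def.skewpbwextensions}, and check by Bergman's Diamond Lemma that every overlap ambiguity — among the $x$'s ($x_kx_jx_i$), of type $x_jx_ir$ and $x_i(rr')$, and those inherited from $R$ — is resolvable, using the cocycle identities for $\Sigma$ and $\Delta$ (Proposition \ref{sigmadefinition}) and the grading hypotheses of Definition \ref{def. graded skew PBW ext}; then $A$ is PBW, hence Koszul. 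For (iv), where $R$ is finitely presented and Koszul but need not have a PBW basis, I would again filter: $A$ is graded (Proposition \ref{prop.grad A}), and filtering by total $x$-degree kills the $\delta_i$ and the lower-order terms of the $x_jx_i$-relations, so ${\rm gr}\,A$ is the graded quasi-commutative skew PBW extension of $R$ with the same Hilbert series; by part (ii) it is Koszul (since $R$ is), hence so is $A$. (One may alternatively argue directly through Backelin's distributivity criterion, using that $A$ is finitely presented and quadratic by Remark \ref{rem.prop of graded skew}(viii)--(ix).)

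The part I expect to be most delicate is the book-keeping in the associated-graded reductions used for (i) and (iv) — verifying the filtration is exhaustive and bounded below, that ${\rm gr}$ is exactly the endomorphism-type (resp.\ quasi-commutative) extension, and that the Hilbert series coincide so that Koszulity of ${\rm gr}$ really forces Koszulity of the filtered algebra — together with the reverse implication of (i) for nonzero $\delta$, where, lacking a PBW basis for $R$, the distributivity argument must be run abstractly on the relation lattices rather than on monomials; the verification of all overlap ambiguities in (iii) is the main purely computational burden.
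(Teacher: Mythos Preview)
The paper supplies no proof of this theorem: it is introduced with ``The following theorem was proved by the first author in \cite{Suarez}'' and each of the four parts carries only a pointer to the corresponding result there. So there is nothing in the present paper to compare your proposal against; you have written substantially more than the authors do here.

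On its own merits your sketch is sound and follows the expected lines. The change-of-rings spectral sequence for a single endomorphism-type variable, the reduction of (ii) to that case via Proposition~\ref{prop.baseHilbert quasi}, the Priddy/Diamond-Lemma route for (iii), and the filtration by total $x$-degree for (iv) (with ${\rm gr}\,A$ the graded quasi-commutative extension, hence Koszul by (ii), and Koszulity ascending from the associated graded) are all correct strategies. The one place that needs tightening is the converse of (i) when $\delta\neq 0$. Your appeal to Backelin's distributivity criterion is the right idea, but ``a sublattice of a distributive lattice is distributive'' only helps once you know the relation lattice of $R$ inside $U^{\otimes n}$ (with $U=R_1$) really is the image of the relation lattice of $A$ inside $V^{\otimes n}$ (with $V=A_1=R_1\oplus\bigoplus_i\mathbb{K}x_i$) under a lattice homomorphism. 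The point to add is that the quadratic relations of $A$ split as $(\text{relations of }R)\oplus(\text{relations involving at least one }x_i)$, so each generator $V^{\otimes(i-1)}\otimes R_A\otimes V^{\otimes(n-i-1)}$ is the direct sum of its intersection with $U^{\otimes n}$ and its intersection with the complementary summand; hence the same holds for every element of the lattice they generate, and the projection $V^{\otimes n}\to U^{\otimes n}$ restricts to a lattice homomorphism onto the $R$-lattice, whence distributivity descends. With that observation made explicit, your plan is complete.
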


\section{Skew Calabi-Yau algebras and Nakayama automorphism}

The enveloping algebra of a ring $B$ is defined as $B^e := B\otimes
B^{op}$. We characterize the enveloping algebra of a skew PBW
extension in \cite{ReyesSuarez2107-4}. If $M$ is an $B$-bimodule,
then $M$ is an $B^e$ module with the action given by $(a\otimes
b)\cdot m = amb$, for all $m\in M$, $a, b \in B$. Given
automorphisms $\nu, \tau \in {\rm Aut}(B)$, we can define the
twisted $B^e$-module ${^{\nu}M^{\tau}}$ with the rule $(a\otimes b)
\cdot m = \nu(a)m\tau(b)$, for all $m\in M$, $a, b \in B$. When one
or the other of $\nu$, $\tau$ is the identity map, we shall simply
omit it, writing for example $M^{\nu}$ for ${^{1} M^{\nu}}$.

\begin{proposition}[\cite{Goodman}, Lemma 2.1]\label{lemma2.1 Untwisting}
Let $\nu$, $\sigma$ and  $\phi$ be automorphisms of $B$. Then
\begin{enumerate}
\item [\rm (i)] The map ${^{\nu}B^{\sigma}}\to {^{\phi\nu}B^{\phi\sigma}},\ a\mapsto \phi(a)$ is an isomorphism of $B^e$-modules. In particular,
$${^{\nu} B^{\sigma}}\cong B^{{\nu}^{-1}\sigma} \cong {^{\sigma^{-1}\nu} B} \text{ and } B^{\sigma}\cong {^{\sigma^{-1}}B}.$$
\item [\rm (ii)]  $B\cong B^{\sigma}$ as  $B^e$-modules if and only if $\sigma$ is an inner automorphism.
\end{enumerate}
\end{proposition}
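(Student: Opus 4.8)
The plan is to work directly from the definitions of the twisted bimodules ${^{\nu}B^{\sigma}}$ and to verify in each case that the prescribed map is a bijective $B^e$-module homomorphism. For part (i), I would first check that the map $\Phi\colon {^{\nu}B^{\sigma}}\to {^{\phi\nu}B^{\phi\sigma}}$ given by $a\mapsto \phi(a)$ is well defined and additive; this is clear since $\phi$ is an automorphism of $B$. The content is the $B^e$-linearity: for $x,y\in B$ and $a\in {^{\nu}B^{\sigma}}$, the left-hand action in the source sends $a$ to $\nu(x)\,a\,\sigma(y)$, so $\Phi\bigl((x\otimes y)\cdot a\bigr)=\phi\bigl(\nu(x)a\sigma(y)\bigr)=\phi(\nu(x))\,\phi(a)\,\phi(\sigma(y))=(\phi\nu)(x)\,\Phi(a)\,(\phi\sigma)(y)$, which is exactly $(x\otimes y)\cdot\Phi(a)$ in the target. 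Since $\phi$ is bijective with inverse $\phi^{-1}$, and $\phi^{-1}$ induces the inverse map, $\Phi$ is an isomorphism of $B^e$-modules.

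The three displayed special cases then follow by choosing $\phi$ appropriately. Taking $\phi=\nu^{-1}$ gives ${^{\nu}B^{\sigma}}\cong {^{1}B^{\nu^{-1}\sigma}}=B^{\nu^{-1}\sigma}$; taking $\phi=\sigma^{-1}$ gives ${^{\nu}B^{\sigma}}\cong {^{\sigma^{-1}\nu}B^{1}}={^{\sigma^{-1}\nu}B}$; and the last isomorphism $B^{\sigma}\cong {^{\sigma^{-1}}B}$ is the case $\nu=\mathrm{id}$ of either of these. For part (ii), suppose first that $\sigma$ is inner, say $\sigma(a)=uau^{-1}$ for a fixed unit $u\in B$. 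Then I claim the map $B\to B^{\sigma}$, $a\mapsto au$ (equivalently $a\mapsto u a$, up to adjusting sides) is a $B^e$-isomorphism: for $x,y\in B$, $(x\otimes y)\cdot a = xay$ in $B$, which maps to $xayu$, while $(x\otimes y)$ acting on $au$ in $B^{\sigma}$ gives $x\,(au)\,\sigma(y)=xau\,uyu^{-1}$— so I would instead use $a\mapsto u^{-1}a$ and compute $(x\otimes y)\cdot a = xay\mapsto u^{-1}xay$ versus $x\,(u^{-1}a)\,\sigma(y)=xu^{-1}a\,uyu^{-1}$; the bookkeeping of which side carries $u$ is the one genuinely fiddly point, and I would fix the convention so that left multiplication in the source matches the untwisted left action and right multiplication by $u^{\pm1}$ absorbs the twist on the right. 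Conversely, if $B\cong B^{\sigma}$ as $B^e$-modules via some isomorphism $\psi$, then $u:=\psi(1)$ must be a unit (its inverse being $\psi^{-1}(1)$, using that $\psi$ is a bimodule map and $1$ generates), and bimodule-linearity forces $\psi(a)=\psi(a\cdot 1\cdot 1)=a\,u$ on one side and a corresponding relation on the other, which together yield $xu=u\sigma(x)$ for all $x$, i.e. $\sigma(x)=u^{-1}xu$, so $\sigma$ is inner.

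The main obstacle is purely organizational rather than deep: keeping the left/right conventions for the twisted action consistent throughout, so that the unit $u$ implementing an inner automorphism appears on the correct side and the composition of the forward and backward maps is visibly the identity. Once a fixed convention is chosen (matching the one used implicitly elsewhere in the paper, where $M^{\nu}$ twists the \emph{right} action), parts (i) and (ii) are routine verifications, and I would present them compactly, citing \cite{Goodman} for the statement and spelling out only the linearity computation in (i) and the construction of the mutually inverse maps in (ii).
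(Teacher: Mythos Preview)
The paper does not supply its own proof of this proposition: it is quoted verbatim from \cite{Goodman}, Lemma 2.1, and used as a black box (notably in the proof of Proposition~\ref{Nak.uniq}). So there is nothing in the paper to compare your argument against; your proposal is in fact \emph{more} than what the paper offers.

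That said, your argument is the standard one and is correct. Part~(i) is clean and complete as you wrote it. In part~(ii) you flag the side-of-$u$ bookkeeping as the only delicate point, and indeed your first two attempts there are garbled; the clean choice under the paper's convention $(x\otimes y)\cdot a = x\,a\,\sigma(y)$ in $B^{\sigma}$ is the map $\psi\colon B\to B^{\sigma}$, $a\mapsto au^{-1}$, when $\sigma(b)=ubu^{-1}$: then $\psi(xay)=xayu^{-1}$ while $(x\otimes y)\cdot\psi(a)=x(au^{-1})\sigma(y)=xau^{-1}uyu^{-1}=xayu^{-1}$. For the converse, your outline is right: set $u=\psi(1)$, use left $B$-linearity to get $\psi(a)=au$ and right $B$-linearity to get $\psi(a)=u\sigma(a)$, whence $au=u\sigma(a)$; invertibility of $u$ follows from bijectivity of $\psi$ (if $v=\psi^{-1}(1)$ then $vu=1$, and the relation $au=u\sigma(a)$ with $a=v$ gives $u\sigma(v)=1$). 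If you tidy the unit placement in the ``$\sigma$ inner $\Rightarrow$ $B\cong B^{\sigma}$'' direction along these lines, the write-up is complete.
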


An algebra $B$ is said to be \emph{homologically smooth}, if  as an $B^e$-module, $B$ has a finitely generated projective resolution of finite length. The length of this resolution is known as the
\emph{Hochschild dimension} of $B$.

\begin{definition}\label{def1.1} A graded algebra $B$ is said to be \emph{skew Calabi-Yau} of dimension $d$ if
\begin{enumerate}

\item[\rm (i)] $B$ is homologically smooth.
\item[\rm (ii)] There exists an  algebra automorphism  $\nu$ of $B$ such that \[Ext^i_{B^e} (B,B^e) \cong \left\{
                                         \begin{array}{ll}
                                           0, & i\neq d; \\
                                           B^{\nu}(l), & i= d.
                                         \end{array}
                                       \right.\]
 as $B^e$ -modules, for some integer $l$.  If $\nu$ is the identity, then $B$ is said to be \emph{Calabi-Yau}.
\end{enumerate}
\end{definition}

Sometimes condition (ii) is called the skew Calabi-Yau condition.\\

The skew Calabi-Yau condition appears to have first been explicitly defined in \cite{Brown} under the term {\em rigid Gorenstein}. The automorphism $\nu$ is called the \emph{Nakayama} automorphism of $B$, and is unique up to inner automorphisms of $B$. As a consequence of Proposition \ref{lemma2.1 Untwisting}, we have that a skew Calabi-Yau algebra is Calabi-Yau  if and only if its Nakayama automorphism is inner. If $B$ is a Calabi-Yau algebra of dimension $d$, then the Hochschild dimension of $B$ (that is, the projective dimension of $A$ as an $A$-bimodule) is $d$ (see \cite{Berger3}, Proposition 2.2). Moreover,  the Hochschild dimension of $B$  coincides with the global dimension of $B$ (see \cite{Berger3}, Remark 2.8).

\begin{proposition}\label{Nak.uniq}
Let $B$ be a skew Calabi-Yau algebra with Nakayama automorphism
$\nu$. Then $\nu$ is unique up to an inner automorphism, i.e, the
Nakayama automorphism is determined up to multiplication by an inner
automorphism of $B$.
\end{proposition}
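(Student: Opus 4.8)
The statement to prove is Proposition \ref{Nak.uniq}: the Nakayama automorphism of a skew Calabi-Yau algebra is unique up to an inner automorphism.

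The plan is to show that if $\nu$ and $\nu'$ are two algebra automorphisms of $B$ both satisfying the skew Calabi-Yau condition in Definition \ref{def1.1}(ii), then $\nu^{-1}\nu'$ (equivalently $\nu'\nu^{-1}$) is an inner automorphism of $B$. First I would record what the hypothesis gives: there are isomorphisms of $B^e$-modules $\mathrm{Ext}^d_{B^e}(B,B^e)\cong B^{\nu}(l)$ and $\mathrm{Ext}^d_{B^e}(B,B^e)\cong B^{\nu'}(l')$ (and both vanish in degrees $i\neq d$), while all lower Ext vanish. The left-hand side is intrinsic to $B$, so composing the two isomorphisms yields an isomorphism of $B^e$-modules $B^{\nu}(l)\cong B^{\nu'}(l')$. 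Comparing Hilbert series (or just looking at the degree in which the generator sits) forces $l=l'$, so after forgetting the shift we obtain $B^{\nu}\cong B^{\nu'}$ as $B^e$-modules.

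Next I would reduce the comparison of $B^{\nu}$ and $B^{\nu'}$ to a single twist. By Proposition \ref{lemma2.1 Untwisting}(i), applying the automorphism $\nu'^{-1}$ (or using the displayed isomorphisms ${}^{\nu}B^{\sigma}\cong B^{\nu^{-1}\sigma}$), an isomorphism $B^{\nu}\cong B^{\nu'}$ of $B^e$-modules is equivalent to an isomorphism $B\cong B^{\nu^{-1}\nu'}$ of $B^e$-modules: explicitly, ${}^{1}B^{\nu}\cong {}^{\nu'^{-1}}B^{\nu'^{-1}\nu}$ and the latter, after the standard identification, is $B^{\nu'^{-1}\nu}$ up to the isomorphism class, so $B\cong B^{\nu'^{-1}\nu}$. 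Then Proposition \ref{lemma2.1 Untwisting}(ii) applies directly: $B\cong B^{\sigma}$ as $B^e$-modules if and only if $\sigma$ is inner. Hence $\nu'^{-1}\nu$ (equivalently $\nu^{-1}\nu'$) is an inner automorphism of $B$, which is exactly the assertion that the Nakayama automorphism is determined up to multiplication by an inner automorphism.

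The one genuinely delicate point is the bookkeeping of the graded shifts and the direction of the twists when juggling ${}^{\nu}B^{\sigma}$-notation: one must make sure the isomorphism $\mathrm{Ext}^d_{B^e}(B,B^e)\cong B^{\nu}(l)$ is an isomorphism of $B^e$-modules on the nose (not merely of left or right modules) so that Proposition \ref{lemma2.1 Untwisting} is applicable, and that matching the internal degrees forces $l=l'$ rather than leaving an ambiguous shift. Everything else is a formal consequence of Proposition \ref{lemma2.1 Untwisting}; no homological algebra beyond what is already packaged in Definition \ref{def1.1} is needed, since the key module $\mathrm{Ext}^d_{B^e}(B,B^e)$ depends only on $B$ and not on the choice of Nakayama automorphism.
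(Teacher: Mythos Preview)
Your proposal is correct and follows essentially the same route as the paper: from $B^{\nu}\cong \mathrm{Ext}^d_{B^e}(B,B^e)\cong B^{\nu'}$ one applies Proposition~\ref{lemma2.1 Untwisting}(i) to get $B\cong B^{\nu^{-1}\nu'}$ and then Proposition~\ref{lemma2.1 Untwisting}(ii) to conclude that $\nu^{-1}\nu'$ is inner. The paper's proof is the same argument in two lines; your extra care about the graded shifts $l,l'$ and the direction of the twists is not wrong, but the paper simply suppresses the shift (and in any case one may forget the grading and apply Proposition~\ref{lemma2.1 Untwisting} at the level of ungraded $B^e$-modules, so the equality $l=l'$ is not actually needed for the conclusion).
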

\begin{proof}
Let $B$ be a skew Calabi-Yau algebra with Nakayama automorphism
$\nu$ and let $\mu$ another Nakayama automorphism, i.e.,
$Ext^d_{B^e} (B,B^e) \cong B^{\mu}$, then $Ext^d_{B^e} (B,B^e) \cong
B^{\nu} \cong B^{\mu}$ as $B^e$ -modules. By Proposition
\ref{lemma2.1 Untwisting}-(i), $ B\cong B^{\nu^{-1}\mu}$; by
Proposition  \ref{lemma2.1 Untwisting}-(ii), $\nu^{-1}\mu$ is an
inner automorphism of $B$. Let $\nu^{-1}\mu=\sigma$ where $\sigma$
is an inner automorphism of $B$, so $\mu = \nu \sigma$  for some
inner automorphism $\sigma$ of $B$.
\end{proof}

\begin{proposition}[\cite{Reyes}, Lemma 1.2]\label{prop.lema1.2}
Let $B$ be a connected graded algebra. Then $B$ is skew Calabi-Yau  if and only if it is AS-regular.
\end{proposition}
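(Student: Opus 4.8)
We indicate the strategy; complete details are in \cite{Reyes}, Lemma 1.2. The plan is to link the two conditions through a single object of the derived category of graded $B$-modules. On the one hand, the AS-Gorenstein condition of Section~3 says precisely that ${\rm RHom}_B(\mathbb{K},B)\simeq\mathbb{K}(l)[-d]$. On the other, writing $B$ for the diagonal bimodule, the skew Calabi--Yau condition (Definition~\ref{def1.1}) says that $B$ is a perfect complex over $B^e$ and ${\rm RHom}_{B^e}(B,B^e)\simeq B^{\nu}(l)[-d]$. These are connected by the change-of-rings isomorphism
\[
{\rm RHom}_{B^e}(B,B^e)\otimes^{\mathbf{L}}_{B}\mathbb{K}\simeq{\rm RHom}_B(\mathbb{K},B),
\]
valid whenever $B$ is a perfect $B^e$-complex (and symmetrically from the other side). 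Throughout one uses that $B$ and $B^e$ are connected graded, so that finitely generated graded projectives are free, the graded Nakayama lemma applies, $B$ is locally finite, and every graded automorphism of $B$ fixes the augmentation $B\to\mathbb{K}$ and hence acts trivially on $\mathbb{K}$ (so that $B^{\nu}(l)\otimes_B\mathbb{K}\cong\mathbb{K}(l)$).

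For the implication AS-regular $\Rightarrow$ skew Calabi--Yau: using that $B$ is finitely presented, the standard construction upgrades the finite minimal graded free resolution of $\mathbb{K}$ (of length $d={\rm gldim}\,B$, finitely generated in each degree) to a finite resolution of the diagonal bimodule $B$ by finitely generated free $B^e$-modules, so $B$ is homologically smooth of Hochschild dimension $d$. Dualizing that bimodule resolution into $B^e$ and inserting it into the displayed isomorphism gives ${\rm RHom}_{B^e}(B,B^e)\otimes^{\mathbf{L}}_B\mathbb{K}\simeq{\rm RHom}_B(\mathbb{K},B)\simeq\mathbb{K}(l)[-d]$ by the Gorenstein hypothesis. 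Since ${\rm RHom}_{B^e}(B,B^e)$ is represented by a bounded complex of finitely generated free $B^e$-modules, a graded Nakayama-lemma argument (using local finiteness) forces its cohomology to be concentrated in degree $d$ and, by perfectness, forces $E:={\rm Ext}^d_{B^e}(B,B^e)$ to be an invertible $B$-bimodule; over a connected graded algebra such a bimodule is necessarily of the form $B^{\nu}(l)$ for a graded automorphism $\nu$, unique up to inner automorphism (Proposition~\ref{Nak.uniq}), and this $\nu$ is the Nakayama automorphism.

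For the converse, skew Calabi--Yau $\Rightarrow$ AS-regular: applying $-\otimes_B\mathbb{K}$ to the given finite resolution of $B$ by finitely generated projective $B^e$-modules yields a finite resolution of $\mathbb{K}$ by finitely generated free left $B$-modules, so ${\rm pd}_B\,\mathbb{K}<\infty$ and hence ${\rm gldim}\,B={\rm pd}_B\,\mathbb{K}=d<\infty$. Feeding the skew Calabi--Yau condition into the displayed isomorphism and using that $\nu$ fixes $\mathbb{K}$ gives ${\rm RHom}_B(\mathbb{K},B)\simeq(B^{\nu}(l)\otimes_B\mathbb{K})[-d]\simeq\mathbb{K}(l)[-d]$, i.e. $\underline{{\rm Ext}}^i_B(\mathbb{K},B)=0$ for $i\neq d$ and $\underline{{\rm Ext}}^d_B(\mathbb{K},B)\cong\mathbb{K}(l)$, so that $B$ is AS-Gorenstein of dimension $d$. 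The finite Gelfand--Kirillov dimension clause in the notion of AS-regular is, in this direction, matched with the definition used in \cite{Reyes} (which does not impose it); it is in any case automatic for the algebras considered in this paper, where $R$ has finite GK dimension and each Ore extension of endomorphism type raises GK dimension by one.

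The step I expect to be the main obstacle is the identification of the top Ext-bimodule $E={\rm Ext}^d_{B^e}(B,B^e)$ with a shifted twisted bimodule $B^{\nu}(l)$: one must carry two commuting $B$-actions through the dualization, verify that the surviving module is genuinely invertible (rather than merely free of rank one on each side separately) so that $\nu$ is an honest \emph{automorphism} --- this is where local finiteness enters --- and pin down the internal shift $l$. Making the change-of-rings isomorphism precise with the correct side conventions belongs to the same technical core, whereas the passages between projective dimensions and the Nakayama-lemma vanishing arguments are routine.
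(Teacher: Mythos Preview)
The paper does not give a proof of this proposition; it is simply quoted from \cite{Reyes}, Lemma~1.2, without argument. So there is no ``paper's own proof'' to compare against.

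Your sketch is a faithful outline of the argument one finds in the cited source: the derived change-of-rings isomorphism ${\rm RHom}_{B^e}(B,B^e)\otimes^{\mathbf L}_B\mathbb{K}\simeq{\rm RHom}_B(\mathbb{K},B)$ is exactly the bridge used there, and your two directions (upgrading the minimal free resolution of $\mathbb{K}$ to a finite free $B^e$-resolution of the diagonal, respectively tensoring a given $B^e$-resolution down to a $B$-resolution of $\mathbb{K}$) are the standard moves. You also correctly isolate the genuinely delicate step, namely showing that ${\rm Ext}^d_{B^e}(B,B^e)$ is an invertible bimodule and hence of the form $B^{\nu}(l)$; in \cite{Reyes} this is handled via local finiteness and the structure of graded invertible bimodules over a connected algebra, just as you indicate.

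One point worth flagging explicitly: the definition of AS-regular adopted in this paper (Section~3) includes finite Gelfand--Kirillov dimension, whereas Definition~\ref{def1.1} imposes no such growth condition. The equivalence as literally stated therefore needs the observation you make at the end --- that \cite{Reyes} works with the weaker notion omitting the GK clause, and that finite GK dimension is in any case automatic for the algebras actually used later (iterated Ore extensions of an AS-regular base). Without that remark the ``only if'' direction would have a gap.
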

\begin{proposition}\label{prop. ASKoszulCY} Let $R$ be a Koszul Artin-Schelter regular algebra of global dimension $d$ with  Nakayama automorphism $\nu$.
\begin{enumerate}
\item[\rm (i)]{\rm (\cite{He5}, Theorem 3.3)}  The skew polynomial algebra $B=R[x;\nu]$ is a Calabi-Yau algebra of dimension $d+1$.
\item[\rm (ii)]{\rm (\cite{Zhu}, Remark 3.13)}  There exists a unique skew polynomial extension $B$ such that $B$ is Calabi-Yau.
\item[\rm (iii)]{\rm (\cite{Zhu}, Theorem 3.16)}  If $\sigma$ is a graded algebra automorphism of $R$, then $B = R[x; \sigma]$ is Calabi-Yau if and only if $\sigma = \nu$.
\end{enumerate}
\end{proposition}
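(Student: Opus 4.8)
Since the statement reproduces \cite[Theorem~3.3]{He5} and \cite[Remark~3.13, Theorem~3.16]{Zhu}, one acceptable proof is simply to quote those references; here instead is how I would argue from scratch. First I would observe that $B = R[x;\nu]$, where $x$ has degree $1$ and $xr = \nu(r)x$ for $r\in R$, is a connected graded quasi-commutative skew PBW extension $\sigma(R)\langle x\rangle$ of $R$: with a single variable, conditions (iii$'$) and (iv$'$) of Definition~\ref{sigmapbwderivationtype}(b) hold automatically, and $\nu$ is graded (the Nakayama automorphism of a connected graded AS-regular algebra can, and will, be taken graded), so Proposition~\ref{prop.quasi is grad} applies. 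Hence Proposition~\ref{prop. AS impl AS} gives that $B$ is AS-regular, Theorem~\ref{teo.Kosz impl Kosz}(i) (with $n=1$, $\delta=0$) gives that $B$ is Koszul, and $B$ has global dimension $d+1$, since each Ore extension of an AS-regular algebra raises the global dimension by one (cf.\ \cite{Liu1}). As $B$ is connected (Remark~\ref{rem.prop of graded skew}(ii)), Proposition~\ref{prop.lema1.2} shows $B$ is skew Calabi-Yau; let $\mu$ denote its graded Nakayama automorphism. What remains is to prove $\mu = \operatorname{id}_B$.

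The crux is a formula for the Nakayama automorphism of a graded Ore extension: if $S = R[x;\sigma]$ with $R$ AS-regular, $\nu_R$ the Nakayama automorphism of $R$, and $\sigma$ a graded automorphism, then $\mu_S|_R = \sigma^{-1}\circ\nu_R$ and $\mu_S(x) = \operatorname{hdet}_R(\sigma)\,x$, where $\operatorname{hdet}_R$ denotes the homological determinant. I would prove this by splicing the Koszul bimodule resolution of $R$ over $R^e$ with the length-one bimodule resolution $0\to S(-1)\to S\otimes S\to S\to 0$ attached to the polynomial variable, producing a finite graded projective resolution of $S$ over $S^e$, and then computing $\operatorname{Ext}^{d+1}_{S^e}(S,S^e)$: the $R$-part contributes the twist $\nu_R$ coming from the canonical (rigid Gorenstein) bimodule of $R$, composed with a $\sigma^{-1}$-twist produced by commuting $x$ through the resolution, while the $x$-part contributes precisely the scalar by which $\sigma$ rescales the top of the Koszul complex of $R$, namely $\operatorname{hdet}_R(\sigma)$. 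Granting this, together with the theorem of Reyes--Rogalski--Zhang that the homological determinant of the Nakayama automorphism of an AS-Gorenstein algebra equals $1$, I get, taking $\sigma = \nu = \nu_R$: $\mu|_R = \nu_R^{-1}\circ\nu_R = \operatorname{id}_R$ and $\mu(x) = \operatorname{hdet}_R(\nu_R)\,x = x$, so $\mu = \operatorname{id}_B$ and $B$ is Calabi-Yau of dimension $d+1$; this gives (i).

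For (iii), the implication $\sigma = \nu \Rightarrow R[x;\sigma]$ Calabi-Yau is (i). Conversely, if $\sigma$ is a graded automorphism of $R$ and $R[x;\sigma]$ is Calabi-Yau, the same formula gives a graded Nakayama automorphism $\mu$ with $\mu|_R = \sigma^{-1}\circ\nu$; since $R[x;\sigma]$ is connected graded, being Calabi-Yau forces $\mu = \operatorname{id}$ (a graded inner automorphism of a connected graded algebra is trivial, the homogeneous units being scalars), whence $\sigma^{-1}\circ\nu = \operatorname{id}_R$, i.e.\ $\sigma = \nu$. Part (ii) is then immediate: existence is (i) with $\sigma = \nu$, and uniqueness among graded skew polynomial extensions of endomorphism type is exactly the equivalence just established.

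The step I expect to be the real obstacle is the Ore-extension formula of the second paragraph, and within it the bookkeeping showing that the $\sigma^{-1}$-twist introduced by the adjoined variable cancels against the $\nu_R$-twist carried by the canonical bimodule of $R$; this cancellation is precisely what makes $\sigma = \nu$, and only that choice, yield an untwisted --- hence Calabi-Yau --- extension. Everything else is formal, relying on Propositions~\ref{prop. AS impl AS} and \ref{prop.lema1.2}, Theorem~\ref{teo.Kosz impl Kosz}(i), and the homological-determinant identity.
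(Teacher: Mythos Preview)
The paper supplies no proof of this proposition: it is purely a compendium of results quoted from \cite{He5} and \cite{Zhu}, with the bracketed citations in the statement itself serving as the entire justification. Your proposal therefore goes well beyond what the paper does, by sketching an independent argument.

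Your argument is essentially correct and follows the natural route. A couple of remarks on how it sits within the paper. First, the Ore-extension Nakayama formula that you identify as the crux --- $\mu_S|_R = \sigma^{-1}\nu_R$ and $\mu_S(x) = (\mathrm{hdet}_R\,\sigma)\,x$ for $S = R[x;\sigma]$ --- is recorded separately in the paper as Proposition~\ref{prop.3.15} (also quoted from \cite{Zhu}), and in a version without the Koszul hypothesis as Theorem~\ref{Theorem 3.3} and Remark~\ref{rem3.4} (from \cite{Liu1}); so within the paper's logical framework you could simply invoke those statements rather than rebuild the bimodule-resolution computation. Second, the fact $\mathrm{hdet}(\nu_R)=1$ that you need for part~(i) is not stated in the paper, so you are importing it from \cite{Reyes}; that is legitimate but worth flagging, since it is the one ingredient not already on the page. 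With those two inputs your deductions of (i), (iii), and then (ii) are clean, and your observation that a connected graded algebra has no nontrivial inner automorphisms (so Calabi-Yau forces $\mu=\mathrm{id}$, not merely $\mu$ inner) is exactly the right way to close the converse in (iii).

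In short: the paper treats this proposition as a black box; you open the box and the contents are as you describe. The only place your sketch is genuinely thin is the spliced-resolution computation of $\mathrm{Ext}^{d+1}_{S^e}(S,S^e)$, but since that is precisely the content of the cited results in \cite{He5}, \cite{Zhu}, and \cite{Liu1}, and the paper itself makes no attempt at it, this is not a gap relative to the paper's own standard.
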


The following theorem can also be found in
\cite{SuarezLezamaReyes2107-1}.

\begin{theorem}\label{teo.skew imp skew} Every graded quasi-commutative skew PBW extension $A=\sigma(R)\langle x_1,\dots, x_n\rangle$ of a finitely presented  skew  Calabi-Yau  algebra $R$ of global dimension
$d$, is skew Calabi-Yau of global dimension $d+n$. Moreover, if $R$
is Koszul and
 $\theta_i$ is the Nakayama automorphism of $R[x_1;\theta_1]\cdots [x_{i-1};\theta_{i-1}]$ for $1\leq i\leq n$, then $A$ is Calabi-Yau of dimension $d+n$ ($\theta_i$ as in Proposition \ref{prop.baseHilbert quasi}-(ii), $x_0=1$).
\end{theorem}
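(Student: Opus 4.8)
The plan is to reduce everything to the iterated Ore extension picture and then apply the results of Section~4 one layer at a time. First I would invoke Proposition \ref{prop.baseHilbert quasi}: since $A=\sigma(R)\langle x_1,\dots,x_n\rangle$ is a graded quasi-commutative skew PBW extension, we have a graded algebra isomorphism $A\cong R[z_1;\theta_1]\cdots[z_n;\theta_n]$ where each $\theta_j$ is a graded automorphism (using that $A$ is bijective, via Proposition \ref{1.3.3}-(ii), together with the fact that $\sigma_i$ is graded and $c_{i,j}\in R_0$ in the graded case). Write $A^{(0)}:=R$ and $A^{(i)}:=R[z_1;\theta_1]\cdots[z_i;\theta_i]$, so that $A^{(i)}=A^{(i-1)}[z_i;\theta_i]$ and $A^{(n)}\cong A$. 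Since $R$ is finitely presented, so is each $A^{(i)}$, and by Remark \ref{rem.prop of graded skew} each $A^{(i)}$ is a finitely graded (in particular connected, locally finite) graded algebra.

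For the first assertion, I would argue by induction on $i$ that $A^{(i)}$ is skew Calabi-Yau of global dimension $d+i$. The base case $i=0$ is the hypothesis on $R$. For the inductive step, $A^{(i)}=A^{(i-1)}[z_i;\theta_i]$ is a graded Ore extension of endomorphism type by a graded automorphism $\theta_i$ of the skew Calabi-Yau algebra $A^{(i-1)}$; the standard result that a graded Ore extension $B[z;\sigma]$ of a skew Calabi-Yau algebra $B$ of dimension $e$ by a graded automorphism $\sigma$ is again skew Calabi-Yau of dimension $e+1$ then gives the claim. (This is exactly the kind of statement underlying Proposition \ref{teo.skew imp skew} in the literature; concretely one computes $\mathrm{Ext}^{*}_{(B[z;\sigma])^e}(B[z;\sigma],(B[z;\sigma])^e)$ from a change-of-rings/Rees-type argument, or one cites the homological-smoothness plus AS-regular equivalence of Proposition \ref{prop.lema1.2} after noting that each $A^{(i)}$ is connected graded — so it suffices to show $A^{(i)}$ is AS-regular, which follows from Proposition \ref{prop. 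AS impl AS} applied to the graded quasi-commutative skew PBW extension structure, together with Proposition \ref{prop.lema1.2} again to come back to skew Calabi-Yau.) After $n$ steps we obtain that $A\cong A^{(n)}$ is skew Calabi-Yau of global dimension $d+n$.

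For the "moreover" part, assume in addition that $R$ is Koszul and that, for $1\le i\le n$, $\theta_i$ is the Nakayama automorphism of $A^{(i-1)}=R[x_1;\theta_1]\cdots[x_{i-1};\theta_{i-1}]$ (with $x_0=1$, i.e. $A^{(0)}=R$ and $\theta_1=\sigma_1$ is the Nakayama automorphism $\nu$ of $R$). Here I would again induct on $i$, but now using Proposition \ref{prop. ASKoszulCY}-(i): at each stage $A^{(i-1)}$ is Koszul (by Theorem \ref{teo.Kosz impl Kosz}-(ii), since $A^{(i-1)}$ is a graded quasi-commutative skew PBW extension of the Koszul algebra $R$) and AS-regular (Proposition \ref{prop. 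AS impl AS}), and by the inductive hypothesis $\theta_i$ equals its Nakayama automorphism; hence $A^{(i)}=A^{(i-1)}[z_i;\theta_i]$ is Calabi-Yau of dimension $(d+i-1)+1=d+i$ by Proposition \ref{prop. ASKoszulCY}-(i). Taking $i=n$ yields that $A$ is Calabi-Yau of dimension $d+n$.

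The main obstacle is making the inductive step of the first assertion rigorous: one must know that a graded Ore extension by a graded automorphism preserves the skew Calabi-Yau property and raises the dimension by exactly one, and must track the internal degree shift $l$ carefully through the $\mathrm{Ext}^{*}_{B^e}(B,B^e)$ computation. I expect to handle this either by a direct bimodule resolution argument (building a length-one extension of a projective bimodule resolution of $A^{(i-1)}$ over $z_i$) or, more economically in the connected graded setting, by routing through AS-regularity: connectedness of each $A^{(i)}$ lets me use Proposition \ref{prop.lema1.2} to replace "skew Calabi-Yau" by "AS-regular", and then Proposition \ref{prop. AS impl AS} does the work in one stroke. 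The Calabi-Yau refinement then needs no new homological input beyond Proposition \ref{prop. ASKoszulCY}, provided the Koszulness of each intermediate extension is in place, which Theorem \ref{teo.Kosz impl Kosz}-(ii) supplies.
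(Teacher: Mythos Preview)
Your proposal is correct and lands on essentially the same argument as the paper. For the first assertion, the paper does exactly what you call the ``economical'' route: since $R$ is connected and skew Calabi-Yau, Proposition~\ref{prop.lema1.2} gives that $R$ is AS-regular; then Proposition~\ref{prop. AS impl AS} makes $A$ AS-regular (and connected), and Proposition~\ref{prop.lema1.2} again yields that $A$ is skew Calabi-Yau, with global dimension $d+n$ read off from the proof of Proposition~\ref{prop. AS impl AS}. There is no need to induct through the intermediate $A^{(i)}$ for this part, so your initial layer-by-layer plan is superfluous (though not wrong). For the ``moreover'' part, the paper proceeds exactly as you do: pass to the graded iterated Ore extension via Proposition~\ref{prop.baseHilbert quasi}, note that each stage stays Koszul and AS-regular, and apply Proposition~\ref{prop. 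ASKoszulCY}-(i) inductively. One small wording issue: when you write ``by the inductive hypothesis $\theta_i$ equals its Nakayama automorphism'', you mean ``by the hypothesis of the theorem'' --- this is assumed outright for every $i$, not something produced by the induction.
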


\begin{proof}
Since $R$ is connected and skew Calabi-Yau, then by Proposition
\ref{prop.lema1.2} we know that $R$ is Artin-Schelter regular. From
Proposition \ref{prop. AS impl AS} we have that $A$ is
Artin-Schelter regular and, in particular, connected. Thus, using
again Proposition \ref{prop.lema1.2}, we have that $A$ is a skew
Calabi-Yau algebra. By the proof of Proposition \ref{prop. AS impl
AS} we have that the global dimension of $A$ is $d+n$.

For the second part, we know that graded Ore extensions of Koszul
algebras are Koszul algebras and, as a particular case of
Proposition \ref{prop. AS impl AS}, we have that a graded Ore
extension of an Artin-Schelter regular algebra is an Artin-Schelter
regular algebra. Now, by Proposition \ref{prop.baseHilbert
quasi}-(ii) we have that $A$ is isomorphic to a  graded iterated
Ore extension $R[x_1;\theta_1]\cdots [x_{n};\theta_{n}]$. It is
known that if $A$ is a Calabi-Yau algebra of dimension $d$, then the
global dimension of $A$ is $d$ (see for example \cite{Berger3},
Remark 2.8). Then, using Proposition \ref{prop. ASKoszulCY}-(i) and
applying induction on $n$ we obtain that $A$ is a Calabi-Yau algebra
of  dimension $d+n$.
\end{proof}

\begin{corollary}\label{cor.AS impl skewCal}
Let $R$ be an Artin-Schelter regular algebra of global dimension
$d$. Then every graded quasi-commutative skew PBW extension
$A=\sigma(R)\langle x_1,\dots, x_n\rangle$ is skew Calabi-Yau of
global dimension $d+n$.
\end{corollary}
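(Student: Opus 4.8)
The plan is to deduce Corollary~\ref{cor.AS impl skewCal} directly from Theorem~\ref{teo.skew imp skew} and Proposition~\ref{prop.lema1.2}. The only gap between the hypotheses of the corollary and those of the theorem is that Theorem~\ref{teo.skew imp skew} is phrased for a \emph{finitely presented skew Calabi-Yau} coefficient algebra $R$, whereas the corollary only assumes $R$ Artin-Schelter regular of global dimension $d$. So first I would record that an AS-regular algebra is, by definition (as recalled at the beginning of Section~3), finitely presented and connected; in particular it is connected graded, so Proposition~\ref{prop.lema1.2} applies and tells us that $R$ is skew Calabi-Yau. Thus $R$ satisfies exactly the hypotheses of the first sentence of Theorem~\ref{teo.skew imp skew}.

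Next I would invoke Theorem~\ref{teo.skew imp skew} verbatim: for any graded quasi-commutative skew PBW extension $A=\sigma(R)\langle x_1,\dots,x_n\rangle$ of this $R$, the conclusion is that $A$ is skew Calabi-Yau of global dimension $d+n$. That is precisely the statement of the corollary, so nothing further is needed. (For completeness one could also note the alternative route: Proposition~\ref{prop. AS impl AS} gives that $A$ is AS-regular, hence connected graded, hence skew Calabi-Yau again by Proposition~\ref{prop.lema1.2}; and the computation of the global dimension as $d+n$ is the one carried out inside the proof of Proposition~\ref{prop. AS impl AS}, obtained by adding $1$ to the global dimension at each of the $n$ Ore-extension steps in the presentation $A\cong R[x_1;\theta_1]\cdots[x_n;\theta_n]$ furnished by Proposition~\ref{prop.baseHilbert quasi}.)

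There is essentially no obstacle here: the corollary is a reformulation of Theorem~\ref{teo.skew imp skew} once one unwinds the definition of AS-regular and applies the equivalence ``connected graded AS-regular $\Leftrightarrow$ skew Calabi-Yau.'' The only thing to be careful about is making sure the connectedness hypothesis needed for Proposition~\ref{prop.lema1.2} is available; this follows from Remark~\ref{rem.prop of graded skew}(ii) together with the fact that an AS-regular algebra $B=\mathbb{K}\oplus B_1\oplus\cdots$ is connected by definition, so both $R$ and $A$ are connected graded. Hence the proof is a two-line citation, and I would write it as such rather than reproving any homological input.

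\begin{proof}
By definition an Artin-Schelter regular algebra is finitely presented and connected graded; hence by Proposition~\ref{prop.lema1.2}, $R$ is skew Calabi-Yau of dimension $d$. Applying Theorem~\ref{teo.skew imp skew} to $R$, every graded quasi-commutative skew PBW extension $A=\sigma(R)\langle x_1,\dots,x_n\rangle$ is skew Calabi-Yau of global dimension $d+n$.
\end{proof}
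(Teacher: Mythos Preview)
Your proposal is correct and matches the paper's implicit argument: the corollary is stated without proof precisely because it follows at once from Theorem~\ref{teo.skew imp skew} together with Proposition~\ref{prop.lema1.2} (an AS-regular algebra being by definition finitely presented and connected). Your two-line proof is exactly how the paper intends the reader to fill in the step.
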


There are two notions of Nakayama automorphisms: one for skew Calabi-
Yau algebras and one for Frobenius algebras. In this
paper, we focus ourselves on skew Calabi-Yau algebras, or equivalently, AS-
regular algebras in the connected graded case (Proposition \ref{prop.lema1.2}). In this case, the two notions
of Nakayama automorphisms will coincide in the sense of the Koszul duality (see \cite{Zhu}, Proposition 1.4). Let $B = T(V )/\langle R\rangle$ be a Koszul algebra.  For a graded automorphism $\sigma$ of $B$, we define a map $\sigma^*: V^*\to V^*$ by $\sigma^*(f)(x) = f(\sigma(x))$, for
each $f\in V^*$ and $x\in V$. Note that $\sigma^*$ induces a graded automorphism of
$B^{!}$ because $\sigma$ is assumed to preserve the relation space $R$. We still use the notation
$\sigma^*$ for this algebra automorphism (see \cite{Zhu}). Suppose that $\{x_1, x_2,\dots, x_n\}$ is a $\mathbb{K}$-linear basis
of $V$ and $\{x_1^*, x_2^*,\dots, x_n^*\}$ is the corresponding dual basis of $V^*$. If $\sigma(x_i)=\sum_jc_{ij}x_j$, for $c_{ij}\in \mathbb{K}$, $1\leq i,j\leq n$, then we have $\sigma^*(x_i^*)= \sum_jc_{ji}x_j^*$. Moreover, for each $i, j = 1, 2,\dots, n$, we have  $\sigma^*(x_i^*)(x_j) = x_i^*(\sigma(x_j))$. Let $B$ be a Koszul AS-regular algebra of dimension $d$. Then, the Nakayama automorphism $\nu$ of $B$ is equal to $\epsilon^{d+1}\mu^*$, where $\mu$ is the Nakayama automorphism of the Frobenius algebra $B^!$ and $\epsilon$ is the automorphism of $B$ defined by $a\mapsto (-1)^{deg \ a}$, for each homogeneous element $a\in B$ (\cite{Zhu}, Proposition 1.4). Let $B$ be a Koszul AS-regular algebra of global dimension $d$, with Nakayama automorphism $\nu$. Suppose that $\sigma$ is a graded automorphism of $B$ and $\sigma^{*}$ is its corresponding dual graded automorphism of the dual algebra $B^{!}$. The \emph{homological determinant}, denoted \rm{hdet}, is a homomorphism from the graded automorphism group $GrAut(B)$ of $B$ to the multiplicative group $\mathbb{K}\setminus\{0\}$ such that $\sigma^{*}(u) = ({\rm{hdet}\sigma})u$, for any $u\in Ext_B^d(\mathbb{K}, \mathbb{K})$ (\cite{Wu}, Proposition 1.11).

\begin{proposition}[\cite{Zhu}, Proposition 3.15]\label{prop.3.15} Suppose that $R$ is a Koszul AS-regular algebra with Nakayama
automorphism $\nu$, $\sigma$ is a graded algebra automorphism of $R$ and $A = R[x; \sigma]$. The Nakayama automorphism $\mu$ of $A$ is given by:
\[
\mu(a)= \left\{
  \begin{array}{ll}
    \sigma^{-1}\nu(a), & a\in R; \\
    {\rm{hdet}}(\sigma)a, & a=x.
  \end{array}
\right.
\]
\end{proposition}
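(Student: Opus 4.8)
The statement is \cite{Zhu}, Proposition 3.15, and the plan is to recover it by Koszul duality. First I would note that $A=R[x;\sigma]$ is again a connected graded Koszul AS-regular algebra: Koszulity is Theorem \ref{teo.Kosz impl Kosz}(i) with $n=1$ and $\delta_{1}=0$, and AS-regularity is Proposition \ref{prop. AS impl AS} applied to the one-variable graded quasi-commutative skew PBW extension $\sigma(R)\langle x\rangle=R[x;\sigma]$. Hence $A$ has global dimension $d+1$, and by Proposition \ref{prop.lema1.2} it is skew Calabi-Yau, so its Nakayama automorphism $\mu$ exists and is unique up to an inner automorphism (Proposition \ref{Nak.uniq}). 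Since $R$ and $A$ are connected graded Koszul AS-regular, the Koszul-duality identity recalled above applies to both: $\nu=\epsilon^{d+1}\mu_{R^{!}}^{*}$ and $\mu=\epsilon^{d+2}\mu_{A^{!}}^{*}$, where $\mu_{R^{!}}$ and $\mu_{A^{!}}$ are the Nakayama automorphisms of the finite-dimensional Frobenius algebras $R^{!}$ and $A^{!}$. So it is enough to compute $\mu_{A^{!}}$ in terms of $\mu_{R^{!}}$ and $\sigma^{*}$, and then dualize.

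Next I would make $A^{!}$ explicit. Writing $R=T(V)/\langle W\rangle$ with $W\subseteq V\otimes V$, we have $A=T(V\oplus\mathbb{K}x)/\langle W,\ \{x\otimes v-\sigma(v)\otimes x:v\in V\}\rangle$, and computing orthogonal complements inside $(V\oplus\mathbb{K}x)^{\otimes 2}$ (with $y:=x^{*}$ dual to $x$) yields
\[
A^{!}\ \cong\ R^{!}\langle y\rangle\big/\bigl\langle\, y^{2},\ \ yh+(\sigma^{*})^{-1}(h)\,y\ \ (h\in V^{*})\,\bigr\rangle ,
\]
so that $A^{!}=R^{!}\oplus R^{!}y$ as a left $R^{!}$-module, with $yr=\bigl(\epsilon\,(\sigma^{*})^{-1}\bigr)(r)\,y$ for homogeneous $r\in R^{!}$. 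Since $A$ is AS-regular, $A^{!}$ is Frobenius, of top degree $d+1$, with $(A^{!})_{d+1}=(R^{!})_{d}\,y$. Let $\phi$ be a Frobenius functional of $R^{!}$, supported in degree $d$ and normalized so that $\phi(ab)=\phi(b\,\mu_{R^{!}}(a))$. I would then verify that $\Phi\colon A^{!}\to\mathbb{K}$, $\Phi(r+sy):=\phi(s)$, is a Frobenius functional of $A^{!}$.

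Now I would read off $\mu_{A^{!}}$ from the identity $\Phi(ab)=\Phi(b\,\mu_{A^{!}}(a))$ evaluated on generators. Taking $a,b\in R^{!}$ forces $\mu_{A^{!}}(R^{!})\subseteq R^{!}$; taking $a\in R^{!}$, $b=sy$ and using nondegeneracy of $\phi$ identifies $\mu_{A^{!}}|_{R^{!}}$ as an explicit composite of $\mu_{R^{!}}$, $\sigma^{*}$ and $\epsilon$; taking $a=y$, $b=sy$ forces $\mu_{A^{!}}(y)=c\,y$; and taking $a=y$, $b\in R^{!}$ shows that $c$ is the scalar by which $\epsilon\,(\sigma^{*})^{-1}$ acts on the one-dimensional space $(R^{!})_{d}=\mathrm{Ext}^{d}_{R}(\mathbb{K},\mathbb{K})$. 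By the definition of the homological determinant recalled above, $\sigma^{*}$ acts on $\mathrm{Ext}^{d}_{R}(\mathbb{K},\mathbb{K})$ as multiplication by ${\rm hdet}(\sigma)$, so $c$ is $(-1)^{d}$ times a fixed power of ${\rm hdet}(\sigma)$. Dualizing on $(A^{!})_{1}=V^{*}\oplus\mathbb{K}y$ and applying $\epsilon^{d+2}$ then returns $\mu$: on the $x$-direction $(\mu_{A^{!}})^{*}(x)=c\,x$, giving $\mu(x)={\rm hdet}(\sigma)\,x$; on $V$, after combining the powers of $\epsilon$ coming from the two Koszul-duality identities and substituting $\nu=\epsilon^{d+1}\mu_{R^{!}}^{*}$, the remaining contribution of $\mu_{A^{!}}|_{R^{!}}$ collapses to $\sigma^{-1}\nu$ on $R$. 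Combining the two pieces gives the asserted formula.

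The only genuinely delicate part is the sign-and-inverse bookkeeping: one must fix the conventions for $R[x;\sigma]$, for the transpose $\sigma\mapsto\sigma^{*}$ (transpose versus inverse-transpose), for ${\rm hdet}$, and for the Nakayama automorphism of a Frobenius algebra, all compatibly with \cite{Zhu}, \cite{Wu} and the Koszul-duality identity, so that the scalar on $x$ lands on ${\rm hdet}(\sigma)$ rather than ${\rm hdet}(\sigma)^{-1}$ and the $R$-part on $\sigma^{-1}\nu$ rather than $\nu\sigma$; a useful consistency check is that $\sigma=\nu$ must force $\mu={\rm id}$, which is compatible with ${\rm hdet}(\nu)=1$ and $\nu^{-1}\nu={\rm id}$. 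An alternative route avoiding Koszul duality is to build a length-$(d+1)$ finitely generated projective resolution of $A$ over $A^{e}$ out of such a resolution $P_{\bullet}$ of $R$ over $R^{e}$ by the standard Ore-extension construction --- a mapping cone of $A\otimes_{R}P_{\bullet}\otimes_{R}A$ with a degree-one complex in the $x$-direction built from left and $\sigma$-twisted right multiplication by $x$ --- and to read $\mu$ off the twist of the top term; this meets the same bookkeeping difficulty exactly at the step of identifying that twist with ${\rm hdet}(\sigma)$.
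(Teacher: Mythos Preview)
The paper does not prove this statement at all: Proposition \ref{prop.3.15} is quoted verbatim from \cite{Zhu}, Proposition 3.15, with no argument given, and is then used only as background (the main Theorem \ref{prop.Nakayama} actually rests on Theorem \ref{Theorem 3.3} and Remark \ref{rem3.4} from \cite{Liu1}, not on Proposition \ref{prop.3.15}). So there is no ``paper's own proof'' to compare against; you have supplied an argument where the paper simply cites one.

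That said, your outline is the natural Koszul-duality proof and matches the strategy of \cite{Zhu}: identify $A^{!}$ as a trivial extension $R^{!}\oplus R^{!}y$ with the twisted commutation rule, build a Frobenius functional on $A^{!}$ from one on $R^{!}$, read off $\mu_{A^{!}}$, and transport back via $\mu=\epsilon^{d+2}\mu_{A^{!}}^{*}$. The structural steps are right. The place where the proposal is still only a sketch is exactly where you flag it: the determination of $\mu_{A^{!}}|_{R^{!}}$ and of the scalar $c$ requires a careful computation, and your text stops at ``an explicit composite of $\mu_{R^{!}}$, $\sigma^{*}$ and $\epsilon$'' and ``a fixed power of ${\rm hdet}(\sigma)$'' without nailing down which composite and which power. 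Until those are written out, one cannot verify that the dualization really lands on $\sigma^{-1}\nu$ and ${\rm hdet}(\sigma)$ rather than their inverses; your own consistency check ($\sigma=\nu\Rightarrow\mu={\rm id}$) only tests the $R$-part, not the scalar on $x$. To turn this into a complete proof you would need to carry out the Frobenius computation explicitly with fixed conventions, as \cite{Zhu} does.
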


\begin{theorem}[\cite{Liu1}, Theorem 3.3]\label{Theorem 3.3} Let $K$ be a unital commutative ring and let $R$ be a projective $K$-algebra and $A = R[x; \sigma, \delta]$ be an Ore extension. Suppose that $R$ is an skew Calabi-Yau algebra of dimension $d$ with Nakayama automorphism $\nu$. Then $A$ is skew Calabi-Yau of dimension $d + 1$ and the Nakayama automorphism $\nu'$ of $A$ satisfies that $\nu'|_R = \sigma^{-1}\nu$ and $\nu'(x) = ux + b$ with $u$, $b\in A$ and $u$ invertible.
\end{theorem}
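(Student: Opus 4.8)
The plan is to build a finite projective bimodule resolution of $A$ over $A^{e}$ by adjoining a single Ore variable to such a resolution of $R$ over $R^{e}$, and then to dualize it. Throughout one uses that $R$ is projective over $K$ (so $A=\bigoplus_{i\ge 0}Rx^{i}$ is projective over $K$ and flat on both sides over $R$) and, as is built into the phrase ``Ore extension'', that $\sigma$ is an automorphism; then $A$ is also right free over $R$, and $A^{e}=A\otimes_{K}A^{\mathrm{op}}$ is an iterated Ore extension of $R^{e}$ in the two variables $x\otimes 1$ and $1\otimes x$.

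\emph{Homological smoothness and the dimension bound.} Pick a length-$d$ resolution $P_{\bullet}\to R$ by finitely generated projective $R^{e}$-modules (it exists since $R$, being skew Calabi--Yau, is homologically smooth). As $A$ is flat over $R$ on both sides, $A\otimes_{R}P_{\bullet}\otimes_{R}A\cong A^{e}\otimes_{R^{e}}P_{\bullet}$ is a finite resolution of $A\otimes_{R}A$ by finitely generated projective $A^{e}$-modules. I would then use the short exact sequence of $A^{e}$-modules
\[
0\longrightarrow\ker(\mathrm{mult})\longrightarrow A\otimes_{R}A\xrightarrow{\ \mathrm{mult}\ }A\longrightarrow 0 ,
\]
where $\ker(\mathrm{mult})$ is the $A^{e}$-module generated by $x\otimes 1-1\otimes x$; a short computation with $xr=\sigma(r)x+\delta(r)$ shows $\ker(\mathrm{mult})\cong A^{e}/\langle\, r\otimes 1-1\otimes\sigma^{-1}(r):r\in R\,\rangle$, i.e.\ a $\sigma$-twisted copy of $A\otimes_{R}A$ (so this is \emph{not} the naive polynomial-ring resolution when $\sigma\ne\mathrm{id}$). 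Lifting the inclusion $\ker(\mathrm{mult})\hookrightarrow A\otimes_{R}A$ to a chain map of the corresponding (twisted) copies of $A\otimes_{R}P_{\bullet}\otimes_{R}A$ and forming the mapping cone yields a finite resolution $Q_{\bullet}\to A$ by finitely generated projective $A^{e}$-modules of length $d+1$. Hence $A$ is homologically smooth with Hochschild dimension $\le d+1$.

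\emph{The top $\mathrm{Ext}$ group.} Applying $\mathrm{Hom}_{A^{e}}(-,A^{e})$ to $Q_{\bullet}$ and using the adjunction $\mathrm{Hom}_{A^{e}}(A^{e}\otimes_{R^{e}}P,A^{e})\cong\mathrm{Hom}_{R^{e}}(P,{}_{R^{e}}A^{e})$, the two halves of the dual of the cone compute $\mathrm{Ext}^{\bullet}_{R^{e}}(R,A^{e})$, where $A^{e}$ is viewed as an $R$-bimodule by restriction. Writing $A^{e}=\bigoplus_{m,n\ge 0}x^{m}R\otimes_{K}Rx^{n}$ exhibits it as a direct sum of twists of $R^{e}$ by powers of $\sigma$, so twisting the defining isomorphism $\mathrm{Ext}^{d}_{R^{e}}(R,R^{e})\cong R^{\nu}$ (up to the shift $l$) shows that $\mathrm{Ext}^{\bullet}_{R^{e}}(R,A^{e})$ is concentrated in cohomological degree $d$ and is a sum of $\sigma$-twists of $R$. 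Feeding this into the long exact sequence of the cone — whose connecting map interchanges the two halves, so that only one contribution survives in the top degree — gives $\mathrm{Ext}^{i}_{A^{e}}(A,A^{e})=0$ for $i\ne d+1$ and $\mathrm{Ext}^{d+1}_{A^{e}}(A,A^{e})\cong A^{\nu'}(l')$ for an algebra automorphism $\nu'$ of $A$ (unique up to inner automorphisms). Thus $A$ is skew Calabi--Yau of dimension $d+1$. Tracking the twists — the $\sigma$ coming from the $A$ factor of $A^{e}$ and the $\sigma^{-1}$ from the $A^{\mathrm{op}}$ factor, one of which is absorbed by the degree shift — leaves $\nu'|_{R}=\sigma^{-1}\nu$.

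\emph{The value on $x$, and the main obstacle.} Filter $A$ by $x$-degree, $F_{p}A=\bigoplus_{i\le p}Rx^{i}$, with $\mathrm{gr}\,A\cong R[\bar x;\sigma]$. Since the top of $Q_{\bullet}$ accounts for the $x$-direction in a single homological step, $\nu'$ (and $\nu'^{-1}$) respect this filtration, so $\nu'(x)\in F_{1}A$, i.e.\ $\nu'(x)=ux+b$ with $u,b\in R\subseteq A$; demanding that $\mathrm{gr}(\nu')$ be an automorphism of $R[\bar x;\sigma]$ forces $u$ to be invertible in $R$. The delicate part of the whole argument is the twist bookkeeping in the first two steps: since $\delta\ne 0$ in general, $A$ carries no grading compatible with $\delta$, so the $\sigma$-twists appearing in $\ker(\mathrm{mult})$, in the decomposition of $A^{e}$ over $R^{e}$, and in the long exact sequence of the cone must all be followed honestly at the chain level (or via $\mathrm{gr}\,A=R[\bar x;\sigma]$ together with a filtered-to-graded comparison), and one must check that the bimodule twist recorded by $\mathrm{Ext}^{d+1}_{A^{e}}(A,A^{e})$ is realised by an \emph{algebra} automorphism rather than merely a $K$-linear one; the hypothesis that $R$ is genuinely skew Calabi--Yau, not just homologically smooth, re-enters here through the fact that the relevant top $\mathrm{Ext}$ space is free of rank one.
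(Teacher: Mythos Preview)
The paper does not prove this statement at all: Theorem~\ref{Theorem 3.3} is quoted from \cite{Liu1} (Liu--Wang--Wu, Theorem~3.3) and used as a black box in the proof of Theorem~\ref{prop.Nakayama}. There is therefore no proof in the present paper to compare your attempt against.

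That said, your outline is essentially the strategy carried out in \cite{Liu1}: base-change a finite projective $R^{e}$-resolution of $R$ to $A^{e}$, splice it with the two-term complex arising from the short exact sequence $0\to\ker(\mathrm{mult})\to A\otimes_{R}A\to A\to 0$ via a mapping cone, and then dualize. Your identification of $\ker(\mathrm{mult})$ as a $\sigma$-twisted copy of $A\otimes_{R}A$ is correct (the key relation is $\sigma(r)\cdot(x\otimes 1-1\otimes x)=(x\otimes 1-1\otimes x)\cdot r$ in $A\otimes_{R}A$), and the decomposition of $A^{e}$ over $R^{e}$ into $\sigma$-twisted summands is precisely how one reduces $\mathrm{Ext}^{\bullet}_{R^{e}}(R,A^{e})$ to the known $\mathrm{Ext}^{\bullet}_{R^{e}}(R,R^{e})$.

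The place where your argument is genuinely softer than what is required is the last paragraph. The assertion that $\nu'$ respects the $x$-degree filtration does not follow from the cone having ``a single homological step in the $x$-direction''; that observation constrains the \emph{resolution}, not the automorphism realising the bimodule twist on $\mathrm{Ext}^{d+1}$. In \cite{Liu1} the form $\nu'(x)=ux+b$ with $u$ invertible is extracted from an explicit chain-level computation of the right $A$-action on the top cohomology of the dualized cone, tracking how multiplication by $x$ on the last term interacts with the connecting map and the accumulated $\sigma$-twists. Your filtered-to-graded heuristic points in the right direction, but to make it rigorous you would have to show that the cone resolution, its $\mathrm{Hom}$-dual, and the resulting identification $\mathrm{Ext}^{d+1}_{A^{e}}(A,A^{e})\cong A$ are all compatible with the $x$-filtration, and that the associated graded comparison with $R[\bar x;\sigma]$ actually pins down the leading coefficient as a unit. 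You correctly flag this as the main obstacle; it is indeed where the real work lies.
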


\begin{remark}[\cite{Liu1}, Remark 3.4]\label{rem3.4}
 $\nu'(x) = x + b$ if $\sigma = Id$, and $\nu'(x) = ux$ if $\delta = 0$.
\end{remark}

Let $A = \mathbb{K}\langle x, y\rangle/(yx - xy - x^2)$ be the Jordan plane, which is an AS-regular
algebra of dimension 2. Note that $A = \mathbb{K}[x][y; \delta_1]$ with $\delta_1(x) = x^2$. It follows that
$A$ is skew Calabi-Yau, with the Nakayama automorphism
given by $\nu(x) = x$ and $\nu(y) = 2x + y$. Thus, $A$ is not Calabi-Yau. On one hand, $B = A[z; \nu]$ is an Ore extension of Jordan plane. Then $A$ is
skew Calabi-Yau with the Nakayama automorphism $\nu'$ such that $\nu'(x) = x$ and
$\nu'(y) = y$. On the other hand, $A = \mathbb{K}[x, z][y; \delta]$ where $\delta$ is given by $\delta(x) = x^2$ and $\delta(z) =-2xz$. So, $\nu'(z) = z$.
It follows that $A$ is Calabi-Yau (see \cite{Liu1}).


\begin{theorem}\label{prop.Nakayama}
Let $R$ be an  Artin-Schelter regular algebra with Nakayama
automorphism $\nu$. Then the Nakayama automorphism $\mu$ of a graded
quasi-commutative skew PBW extension $A=\sigma(R)\langle x_1, \dots,
x_n\rangle$  is  given by
\begin{align*}
\mu(r) & = (\sigma_1\cdots\sigma_n)^{-1}\nu(r), \text{ for } r\in R, \text{ and}\\
\mu(x_i) & =u_i\prod_{j=i}^nc_{i,j}^{-1}x_i, \text{ for each } 1\leq
i\leq n,
\end{align*}
where $\sigma_i$ is as in Proposition \ref{sigmadefinition}, $u_i,
c_{i,j}\in \mathbb{K}\setminus \{0\}$, and the elements $c_{i,j}$
are as in Definition \ref{def.skewpbwextensions}.
\end{theorem}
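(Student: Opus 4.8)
The strategy is to reduce to the iterated Ore extension picture and then apply Proposition~\ref{prop.3.15} (or Theorem~\ref{Theorem 3.3}) one variable at a time. By Proposition~\ref{prop.baseHilbert quasi} we may identify $A$ with the graded iterated Ore extension $R[z_1;\theta_1]\cdots[z_n;\theta_n]$, where $\theta_1=\sigma_1$, $\theta_i(r)=\sigma_i(r)$ for $r\in R$, and $\theta_j(z_i)=c_{i,j}z_i$ for $1\le i<j\le n$; here the $c_{i,j}$ lie in $R_0$, and since $R$ is connected AS-regular we have $R_0=\mathbb{K}$, so $c_{i,j}\in\mathbb{K}\setminus\{0\}$. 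Write $A_i:=R[z_1;\theta_1]\cdots[z_i;\theta_i]$, so $A_0=R$ and $A_n=A$, and $A_i=A_{i-1}[z_i;\theta_i]$ with $\theta_i$ a graded automorphism and no derivation. By Corollary~\ref{cor.AS impl skewCal} each $A_i$ is skew Calabi--Yau, so each has a Nakayama automorphism $\mu_i$, and we set up an induction on $i$ computing $\mu_i$ from $\mu_{i-1}$.

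The inductive step is exactly Proposition~\ref{prop.3.15}, applied to the Koszul AS-regular algebra $A_{i-1}$ (Koszulity is preserved under graded Ore extensions by Theorem~\ref{teo.Kosz impl Kosz}, and $R$ itself should be taken Koszul for the cited proposition; alternatively one invokes Theorem~\ref{Theorem 3.3} together with Remark~\ref{rem3.4}, which gives the $\delta=0$ form $\mu_i(z_i)=u_i z_i$ directly without Koszulity). This yields
\begin{align*}
\mu_i|_{A_{i-1}} &= \theta_i^{-1}\circ\mu_{i-1}, \\
\mu_i(z_i) &= \mathrm{hdet}(\theta_i)\, z_i =: u_i z_i, \qquad u_i\in\mathbb{K}\setminus\{0\}.
\end{align*}
Unwinding the composition $\mu_n|_R=\theta_n^{-1}\circ\theta_{n-1}^{-1}\circ\cdots\circ\theta_1^{-1}\circ\nu$ and using $\theta_j|_R=\sigma_j$ gives $\mu(r)=(\sigma_1\cdots\sigma_n)^{-1}\nu(r)$ for $r\in R$ (care is needed with the order: since $\theta_j$ for $j>i$ fixes $R$ only after we pass through it, one checks $\theta_n^{-1}\cdots\theta_1^{-1}$ restricted to $R$ equals $(\sigma_1\cdots\sigma_n)^{-1}$, the order following from each $\theta_j|_R=\sigma_j$ and $\sigma_j$'s commuting or not being irrelevant because we simply compose the restrictions). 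For the generator $x_i\leftrightarrow z_i$: at stage $i$ we get $\mu_i(z_i)=u_i z_i$; then each subsequent application of $\theta_{i+1}^{-1},\dots,\theta_n^{-1}$ (coming from $\mu_j|_{A_{j-1}}=\theta_j^{-1}\mu_{j-1}$ for $j>i$) sends $z_i\mapsto c_{i,j}^{-1}z_i$, since $\theta_j(z_i)=c_{i,j}z_i$. Composing, $\mu(x_i)=u_i\Big(\prod_{j=i+1}^n c_{i,j}^{-1}\Big)x_i$; absorbing the convention $c_{i,i}=1$ (Remark~\ref{rem.grad f y rep}(i)) gives the stated $\mu(x_i)=u_i\prod_{j=i}^n c_{i,j}^{-1}\,x_i$.

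**Main obstacle.** The delicate point is bookkeeping the order of composition of the $\theta_j^{-1}$ and verifying that, when restricted to $R$ and to each $z_i$, they assemble into the claimed closed forms; in particular one must confirm $\theta_j$ acts on $A_{i-1}$ (for $j>i-1$ this is vacuous) compatibly so that the iterated formula $\mu_i=\theta_i^{-1}\mu_{i-1}$ composes without cross terms, which rests on $\theta_j$ being a \emph{graded} automorphism fixing the lower $z_i$'s up to the scalars $c_{i,j}$ and restricting to $\sigma_j$ on $R$. A secondary point is to justify that the scalars $u_i=\mathrm{hdet}(\theta_i)$ are genuinely in $\mathbb{K}\setminus\{0\}$ and that the Nakayama automorphism at each stage is well-defined (uniqueness up to inner automorphism, Proposition~\ref{Nak.uniq}, means the formula is an equality of automorphisms only up to inner ones, which in the connected graded setting we may pin down). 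Everything else is a routine induction once the base case $\mu_0=\nu$ and the one-step rule are in place.
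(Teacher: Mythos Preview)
Your proposal is correct and follows essentially the same route as the paper: reduce to the graded iterated Ore extension via Proposition~\ref{prop.baseHilbert quasi}, set $A_i=R[z_1;\theta_1]\cdots[z_i;\theta_i]$, and inductively compute $\mu_i$ from $\mu_{i-1}$ using the one-variable result, then unwind. The paper works exclusively with Theorem~\ref{Theorem 3.3} and Remark~\ref{rem3.4} (your ``alternative''), since Proposition~\ref{prop.3.15} would require $R$ Koszul, which is not assumed; it also makes explicit the point you flag as secondary, namely that connectedness forces the only inner automorphism of $A$ to be the identity, so the formula is genuinely an equality of automorphisms.
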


\begin{proof}
Note that $A$ is skew Calabi-Yau (see Corollary \ref{cor.AS impl
skewCal}) and therefore the Nakayama automorphism of $A$  exists. By
Proposition \ref{prop.baseHilbert quasi}-(ii) and its proof
 we have that  $A$ is iso\-mor\-phic to a graded iterated Ore extension $R[x_1; \theta_1]\cdots[x_n; \theta_n]$, where $\theta_i$ is bijective;
 $\theta_1=\sigma_1$; $$\theta_j: R[x_1;\theta_1]\cdots[x_{j-1}; \theta_{j-1}]\to R[x_1;\theta_1]\cdots[x_{j-1}; \theta_{j-1}]$$ is
 such that $\theta_j(x_i)=c_{i,j}x_i$ ($c_{i,j}\in \mathbb{K}$ as in Definition \ref{def.skewpbwextensions}), $1 \leq i < j \leq n$
 and $\theta_i(r)=\sigma_i(r)$, for $r\in R$. Note that $\theta_j^{-1}(x_i)=c_{i,j}^{-1}x_i$. 
Now, since $R$ is connected then by Remark \ref{rem.prop of graded
skew}, $A$ is connected. So,  the multiplicative group of $R$ and
also the multiplicative group of $A$ is $\mathbb{K}\setminus \{0\}$,
therefore  the identity
map is the only inner automorphism of $A$. Let $\mu_i$ the Nakayama automorphism of $R[x_1;\theta_1]\cdots[x_{i};\theta_{i}]$.\\
 
By Theorem \ref{Theorem 3.3} and Remark  \ref{rem3.4} we have that the Nakayama automorphism $\mu_1$ of $R[x_1; \theta_1]$ is given by  $\mu_1(r)=\sigma_1^{-1}\nu(r)$ for $r\in R$,  and $\mu_1(x_1) = u_1x_1$ with $u_1\in \mathbb{K}\setminus \{0\}$; the Nakayama automorphism $\mu_2$ of $R[x_1; \theta_1][x_2; \theta_2]$ is given by  $\mu_2(r)=\sigma_2^{-1}\mu_1(r)=\sigma_2^{-1}\sigma_1^{-1}\nu(r)$, for $r\in R$;  $\mu_2(x_1) = \theta_2^{-1}\mu_1(x_1)= \theta_2^{-1}(u_1x_1)=u_1\theta_2^{-1}(x_1)=u_1c_{1,2}^{-1}x_1$ and  $\mu_2(x_2)= u_2x_2$, for $u_2\in \mathbb{K}\setminus \{0\}$; the Nakayama automorphism $\mu_3$ of $R[x_1; \theta_1][x_2; \theta_2][x_3; \theta_3]$ is given by  $\mu_3(r)=\sigma_3^{-1}\mu_2(r)=\sigma_3^{-1}\sigma_2^{-1}\sigma_1^{-1}\nu(r)$, for $r\in R$; $\mu_3(x_1)= \theta_3^{-1}\mu_2(x_1)=  \theta_3^{-1}(u_1c_{1,2}^{-1}x_1)=u_1c_{1,2}^{-1}\theta_3^{-1}(x_1)= u_1c_{1,2}^{-1}c_{1,3}^{-1}x_1$; $\mu_3(x_2)= \theta_3^{-1}\mu_2(x_2)=\theta_3^{-1}(u_2x_2)=u_2c_{2,3}^{-1}x_2$ and $\mu_3(x_3)= u_3x_3$, for $u_3\in \mathbb{K}\setminus \{0\}$.\\
Continuing with the procedure we have that the Nakayama automorphism
of $A$ is given by
$\mu(r)=\mu_n(r)=\sigma_n^{-1}\cdots\sigma_2^{-1}\sigma_1^{-1}\nu(r)$, for $r\in R$; $\mu(x_1)= u_1c_{1,2}^{-1}c_{1,3}^{-1}\cdots
c_{1,n}^{-1} x_1$; $\mu(x_2)=u_2c_{2,3}^{-1}\cdots c_{2,n}^{-1}x_2$.
In general, for $1\leq i\leq n$, we have that
$\mu(x_i)=u_ic_{i,i+1}^{-1}c_{i,i+2}^{-1}\cdots
c_{i,n}^{-1}x_i=u_i(c_{i,n}\cdots c_{i,i+2}c_{i,i+1})^{-1}x_i$, for
$u_i\in \mathbb{K}\setminus \{0\}$. Note that $c_{i,i}=1$ (Remark
\ref{rem.grad f y rep}-(i)).
\end{proof}

\begin{example}\label{ex.quantum affine}  Let   $A=\mathcal{O}_{\textbf{q}}(\mathbb{K}^n)$ be the quantum affine $n$-space.
$A=\mathcal{O}_{\textbf{q}}(\mathbb{K}^n)$ is a graded
quasi-commutative skew PBW extension of $\mathbb{K}[x_1]$ (see
Exam\-ple \ref{ex.quasi}-\ref{mult param quant aff}), with
$\sigma_j(k)=k$ for $k\in\mathbb{K}$ and $\sigma_j(x_1)= q_{1j}x_1$,
$j\geq 2$. Therefore, according to Proposition \ref{prop.baseHilbert
quasi} and its proof, $A$ is isomorphic to a graded iterated Ore
extension $\mathbb{K}[x_1][x_2;\theta_2]\cdots [x_n;\theta_n]$,
where $\theta_j(k)=k$ for $k\in \mathbb{K}$ and
$\theta_j(x_i)=q_{ij}x_i$, for $1\leq i<j\leq n$. Note that the
Nakayama automorphism $\nu$ of $\mathbb{K}[x_1]$ is the identity
map.  Applying Theorem \ref{prop.Nakayama} we have that the Nakayama
automorphism $\mu$ of $A$ is given by $\mu(k)=k$ for $k\in
\mathbb{K}$, $\mu(x_1)= (\sigma_1\cdots\sigma_n)^{-1}\nu(x_1)=
(q_{1n}^{-1}\cdots q_{12}^{-1})x_1= (q_{n1}\cdots q_{21})x_1$, and
$\mu(x_i) =u_iq_{i(i+1)}^{-1}q_{i(i+2)}^{-1}\cdots q_{in}^{-1}
x_i=u_iq_{(i+1)i}q_{(i+2)i}\cdots q_{ni} x_i=
u_iq_{(i+1)i}q_{(i+2)i}\cdots q_{ni} x_i$, for each $2\leq i\leq n$.
Since  $\mu$ is unique up to an inner automorphism (see Proposition
\ref{Nak.uniq}) and the invertible elements in
$\mathcal{O}_{\textbf{q}}(\mathbb{K}^n)$ are those nonzero scalars
in $\mathbb{K}$, the identity map is the only inner automorphism of
$\mathcal{O}_{\textbf{q}}(\mathbb{K}^n)$. Therefore, using the same
reasoning of \cite{Liu1} in the proof of Proposition 4.1, we have
that $u_i=q_{1i}q_{2i}\cdots q_{(i-1)i}$. Then, $\mu(x_i) =
(\prod_{j=1}^{n}q_{ji})x_i$, for $2 \leq i\leq n$.
\end{example}

\begin{example}\label{ex.iterated skew polynomial} Let $R$ be an Artin-Schelter regular algebra of global dimension $d$,
with Nakayama automorphism $\nu$. Let $A= R[x_1,\dots, x_n;
\sigma_1,\dots, \sigma_n]$ be an iterated skew polynomial ring (see
\cite{Goodearl}, page 23-24), with $\sigma_i$ graded. $A$ is a skew
PBW extension of $R$ with relations $x_ir=\sigma_i(r)x_i$ and
$x_jx_i=x_ix_j$, for $r\in R$ and $1\leq i,j\leq n$.  As $R$ is
graded and $c_{i,j}=1\in R_0$, then by Proposition \ref{prop.quasi
is grad} we have that $A$ is a graded quasi-commutative skew PBW
extension of $R$. Therefore, $A$ is a skew Calabi-Yau algebra
(Corollary \ref{cor.AS impl skewCal}).  By Proposition
\ref{prop.baseHilbert quasi}, $R[x_1,\dots, x_n; \sigma_1,\dots,
\sigma_n]\cong R[x_1; \theta_1]\cdots [x_n;\theta_n]$, where
$\theta_j(r)=\sigma_j(r)$ and $\theta_j(x_i)= x_i$ for $i< j$.
Applying Theorem \ref{prop.Nakayama} we have that the Nakayama
automorphism $\mu$ of $A$ is given by $\mu(r)
=(\sigma_1\cdots\sigma_n)^{-1}\nu(r)$, if $r\in R$ and
$\mu(x_i)=u_i\prod_{j=i}^nc_{i,j}^{-1}x_i= u_ix_i$, $u_i\in
\mathbb{K}\setminus \{0\}$, $1\leq i\leq n$.
\end{example}

\begin{example}\label{ex.nak q-dilat oper} The algebra of linear partial q-dilation operators $H$ is a graded
quasi-commu\-tative skew PBW extension of $\mathbb{K}[t_1, \dots,
t_n]$ (see Exam\-ple \ref{ex.quasi}-\ref{ex.q-dilat oper}).
According to Proposition \ref{sigmadefinition}, we have that $\sigma_j(t_i) = t_i\text{ for } i\neq j, \quad 1\leq i \leq n,
\quad 1\leq j \leq m$;  $\sigma_j(t_i) = qt_i \text{ for } i=j,\quad 1 \leq i,j \leq m$; $\delta_j = 0, \text{ for } 1\leq j\leq m$.  By Proposition \ref{prop.baseHilbert quasi}, $H$  is isomorphic to a
graded iterated Ore extension of endomorphism type 
$\mathbb{K}[t_1,\dots, t_n][H_1^{(q)}; \theta_1] \cdots
[H_{m-1}^{(q)}; \theta_{m-1}]H_m^{(q)}; \theta_m]$, 
with $\theta_j(t_i) = t_i\text{ for } i\neq j, \quad 1\leq i \leq n, \quad 1\leq j \leq m$, $\theta_j(t_i) = qt_i \text{ for }
i=j,\quad 1 \leq i, j \leq m$; $\theta_j(H^{(q)}_i) = H^{(q)}_i
\text{ for } 1\leq i, j\leq m$. Since $\mathbb{K}[t_1,\dots, t_n]$
is a Calabi-Yau algebra, then its Nakayama automorphism $\nu$ is the
identity map. Applying Theorem \ref{prop.Nakayama}, we have that the
Nakayama automorphism $\mu$ of $H$ is given by
\begin{align*}
&\mu(t_i) =\sigma_m^{-1}\cdots\sigma_1^{-1}(t_i)=\left\{
                                         \begin{array}{ll}
                                           \sigma_i^{-1}(t_i)=q^{-1}t_i, & 1\leq i\leq m; \\
                                           t_i, & m<i\leq n,
                                         \end{array}
                                       \right.\\
&\mu(H^{(q)}_j)=H^{(q)}_j, \text{ for } 1\leq j\leq m.
\end{align*}
\end{example}

\begin{remark} If in Example \ref{ex.iterated
skew polynomial} $R$ is also a Koszul algebra, then due to the
Theorem \ref{teo.Kosz impl Kosz}-(ii), we have that $A$ is a Koszul
algebra.  The Nakayama automorphism $\mu$ of Example
\ref{ex.iterated skew polynomial} is then $\mu(r)
=(\sigma_1\cdots\sigma_n)^{-1}\nu(r)$, if $r\in R$ and $\mu(x_i)=
({\rm{hdet}}\sigma_i) x_i$, $1\leq i\leq n$ (see \cite{Zhu}, Theorem
4.6). Since  $\mu$ is unique up to an inner automorphism (see
Proposition \ref{Nak.uniq}) and the invertible elements in
$R[x_1,\dots, x_n; \sigma_1,\dots, \sigma_n]$ are those nonzero
scalars in $\mathbb{K}$, the identity map is the only inner
automorphism of $R[x_1,\dots, x_n; \sigma_1,\dots, \sigma_n]$.
Therefore, $({\rm{hdet}}\sigma_i)=u_i$.
\end{remark}

\end{document}